\DeclareSymbolFont{largesymbolsA}{U}{txexa}{m}{n}
\DeclareMathSymbol{\varprod}{\mathop}{largesymbolsA}{16}
\newtheorem{theorem}{Theorem}[section]
\newtheorem{lem}[theorem]{Lemma}
\newtheorem{prop}[theorem]{Proposition}
\newtheorem{cor}{Corollary}[theorem]
\theoremstyle{definition}
\theoremstyle{remark}
\DeclareMathOperator{\sign}{sgn}
\numberwithin{equation}{section}
\newcommand{\C}{\mathfrak{C}}
\begin{document}
\author{Ankush Goswami}
\address{Research Institute for Symbolic Computation (RISC), JKU, Linz.}
\email{ankushgoswami3@gmail.com, ankush.goswami@risc.jku.at}
\title[Some formulae for coefficients of restricted $q$-products]{Some formulae for coefficients in restricted $q$-products}
 \author{Venkata Raghu Tej Pantangi}
 \address{Department of Mathematics, Southern University of Science and Technology (SUSTECH), Shenzhen, China.}
\email{pvrt1990@gmail.com, pantangi@sustech.edu.cn}

\thanks{}

\subjclass[2010]{11P81, 11P83, 11P84}
\keywords{$q$-series, $q$-product, $q$-binomial, Partition, Multinomial, Sieve.}

\date{}

\begin{abstract}
In this paper, we derive some formulae involving coefficients of polynomials which occur quite naturally in the study of restricted partitions. Our method involves a recently discovered sieve technique by Li and Wan (Sci. China. Math. 2010). Based on this method, by considering cyclic groups of different orders we obtain some new results for these coefficients. The general result (see Theorem \ref{main00}) holds for any group of the form $\mathbb{Z}_{N}$ where $N\in\mathbb{N}$ and expresses certain partial sums of coefficients in terms of expressions involving roots of unity. By specializing $N$ to different values, we see that these expressions simplify in some cases and we obtain several nice identities involving these coefficients. We also use a result of Sudler (Quarterly J. Math. 1964) to obtain an asymptotic formula for the maximum absolute value of these coefficients.    
\end{abstract}
\maketitle
\section{Introduction}
In the study of partitions, one often comes across various $q$-series and $q$-products. In these kind of problems we may (a) either need to understand the possible product representation of a given $q$-series; (b) or need to understand the coefficients in the $q$-series expansion of a given $q$-product. Our purpose in this paper is the latter theme. Before embarking on the specific product we are interested in, we begin with a very well-known infinite $q$-product which is closely related to the product we are interested in:
\begin{equation}\label{qprod}
T_{s,\infty}(q):=\prod_{j=1}^\infty(1-q^j)^s,    
\end{equation}
where $s\in \mathbb{N}$ and which converges for $|q|<1$. We note here that the product in (\ref{qprod}) is very general in terms of $s$. There are only a handful of values of $s$ for which the infinite product in (\ref{qprod}) yields a series expansion with explicit coefficients. For $s=1$, the product in (\ref{qprod}) admits the following well-known (single) series expansion.
\begin{theorem}[Euler Pentagonal Number Theorem]\label{Eu}
We have
\begin{eqnarray*}
\prod_{j=1}^\infty(1-q^j)=\sum_{k=-\infty}^\infty (-1)^k q^{k(3k-1)/2}.
\end{eqnarray*}
\end{theorem}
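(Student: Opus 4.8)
The plan is to obtain the identity as a specialization of the Jacobi triple product, which is the cleanest route to the pentagonal exponents $k(3k-1)/2$. Recall the triple product in the form
\begin{equation*}
\prod_{n=1}^{\infty}(1-x^{2n})(1+x^{2n-1}z)(1+x^{2n-1}z^{-1})=\sum_{k=-\infty}^{\infty}x^{k^{2}}z^{k},
\end{equation*}
valid for $|x|<1$ and $z\neq 0$. The point is that this single two-parameter identity, under a judicious substitution, collapses the three factors on the left into the bare product $\prod_{j}(1-q^{j})$ while converting the Gaussian exponents $k^{2}$ into pentagonal ones.

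Concretely, I would set $x=q^{3/2}$ and $z=-q^{-1/2}$. Then $x^{2n}=q^{3n}$, and a one-line computation gives $x^{2n-1}z=-q^{3n-2}$ and $x^{2n-1}z^{-1}=-q^{3n-1}$, so the left-hand side becomes $\prod_{n\geq 1}(1-q^{3n})(1-q^{3n-2})(1-q^{3n-1})$. Since $\{3n-2,3n-1,3n:n\geq 1\}$ is exactly a sorting of $\mathbb{N}$ by residue modulo $3$, this product is precisely $\prod_{j\geq 1}(1-q^{j})$. On the right-hand side, $x^{k^{2}}z^{k}=(-1)^{k}q^{(3k^{2}-k)/2}$, so the sum is $\sum_{k}(-1)^{k}q^{k(3k-1)/2}$, matching the claim. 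The fact that every half-integer power of $q$ cancels term by term is the sanity check confirming that this substitution is the right one.

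The main obstacle is that this merely reduces the theorem to the Jacobi triple product, which itself needs proof. I would establish it by regarding the left-hand side as a function $F(z)$ analytic on $\mathbb{C}\setminus\{0\}$, deriving the functional equation $F(x^{2}z)=(xz)^{-1}F(z)$ by reindexing the product, and matching this against a Laurent expansion $F(z)=\sum_{k}c_{k}z^{k}$; comparing coefficients forces $c_{k}=x^{2k-1}c_{k-1}$, hence $c_{k}=x^{k^{2}}c_{0}$, and the constant $c_{0}$ is then pinned down by a limiting argument as $x\to 0$. Alternatively, and more in keeping with the partition-theoretic flavour of the paper, one can bypass the triple product and give Franklin's bijective proof: expanding $\prod_{j}(1-q^{j})$ shows that the coefficient of $q^{n}$ is the number of partitions of $n$ into an even number of distinct parts minus the number into an odd number, and Franklin's involution---transferring the smallest part against the top staircase of consecutive parts---pairs these with opposite parity except for the self-fixed configurations, which occur exactly at $n=k(3k\pm 1)/2$ and contribute $(-1)^{k}$. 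I expect the delicate bookkeeping in the second route to be verifying that Franklin's map is a well-defined involution with precisely the pentagonal exceptions, whereas the triple-product route concentrates all the difficulty in the single lemma it invokes.
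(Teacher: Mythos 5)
Your proposal is correct. Note first that the paper does not actually prove Theorem \ref{Eu}: it is quoted as classical background, and the only proof the paper gestures at is Franklin's involution (the text explains that the coefficient of $q^n$ is the difference between the numbers of partitions of $n$ into an even and an odd number of distinct parts, and cites Franklin for the involution that cancels these except at pentagonal $n$). Your second route is therefore exactly the argument the paper has in mind, and your description of it --- including the fixed points at $n=k(3k\pm 1)/2$ contributing $(-1)^k$ --- matches the paper's account. Your primary route via the Jacobi triple product is a genuinely different derivation, and the computation checks out: with $x=q^{3/2}$, $z=-q^{-1/2}$ the three factors do become $(1-q^{3n})(1-q^{3n-2})(1-q^{3n-1})$ and the exponent $x^{k^2}z^k$ does become $(-1)^k q^{k(3k-1)/2}$. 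What the triple-product route buys is brevity and a uniform mechanism (the same specialization machinery yields Jacobi's identity for $s=3$, Theorem \ref{Jac} of the paper), at the cost of importing a nontrivial lemma; the one place your sketch of that lemma is thin is the determination of $c_0$, since a bare limit $x\to 0$ shows only $c_0(0)=1$ and one needs an extra argument (e.g.\ comparing the specializations $z=i$ and $x\mapsto x^4$, or an analyticity/uniqueness argument) to conclude $c_0\equiv 1$. Franklin's route is self-contained and combinatorial, which is more in the spirit of the paper, but requires the careful case analysis you flag to verify the involution is well defined away from the pentagonal configurations. Either route is acceptable; the paper commits to neither in detail.
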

Combinatorially, the coefficient of $q^n$ in the left-hand side of the identity in Theorem \ref{Eu} is the difference of the number of partitions of $n$ into an even number of parts and the number of partitions of $n$ into an odd number of parts. Establishing an involution, Franklin \cite{Fra} showed that this difference is zero unless $n=k(3k\pm 1)/2$, a pentagonal number, in which case the coefficient is $(-1)^k$, thereby yielding the right-hand side of the theorem. In \cite{And1}, Andrews gave a modern exposition of Euler's original proof of the theorem and also discussed some consequences. The case $s=3$ yields the following well-known series expansion of Jacobi:
\begin{theorem}[Jacobi]\label{Jac}
We have
\begin{eqnarray*}
\prod_{j=1}^\infty(1-q^j)^3=\sum_{k=0}^\infty (-1)^k(2k+1) \;q^{k(k-1)/2}.
\end{eqnarray*}
\end{theorem}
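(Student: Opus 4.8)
The plan is to realize Jacobi's identity as a limiting case of the Jacobi triple product, which is the standard and most efficient route; note that Theorem~\ref{Eu}, the pentagonal number theorem, is itself a specialization of the same triple product, but for $s=3$ one genuinely needs the full identity together with a derivative trick. I would take as the starting point the classical form
\[
\sum_{n=-\infty}^{\infty}(-1)^n q^{n(n-1)/2}z^n=(1-z)\prod_{m=1}^{\infty}(1-q^m)(1-zq^m)(1-z^{-1}q^m),
\]
valid for $|q|<1$ and $z\neq 0$; here the isolated factor $1-z$ is just the $m=0$ term of the product $\prod_{m\ge 0}(1-zq^m)$.

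First I would observe that both sides vanish at $z=1$: the right-hand side carries the explicit factor $1-z$, while on the left the pairing $n\leftrightarrow 1-n$ fixes the exponent $n(n-1)/2$ but flips the sign $(-1)^n$, so the bilateral sum cancels termwise. Hence I may divide through by $1-z$ to obtain a relation regular at $z=1$. Letting $z\to 1$ on the product side yields exactly $\prod_{m\ge 1}(1-q^m)^3$, the quantity of interest. On the series side, writing $F(z)$ for the numerator, the quotient $F(z)/(1-z)$ tends to $-F'(1)$ because $F(1)=0$, so that
\[
\prod_{m=1}^{\infty}(1-q^m)^3=\sum_{n=-\infty}^{\infty}(-1)^{n+1}\,n\,q^{n(n-1)/2}.
\]

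The final step is to fold this bilateral sum into the claimed one-sided series. Pairing $n$ with $1-n$ once more (the ranges $n\ge 1$ and $n\le 0$ exhausting $\mathbb{Z}$), each pair shares the exponent $n(n-1)/2$ and contributes $(-1)^n(1-2n)\,q^{n(n-1)/2}$; substituting $k=n-1\ge 0$ turns this into $(-1)^k(2k+1)\,q^{k(k+1)/2}$, which is Jacobi's series (the exponents being the triangular numbers, as recorded in the statement after re-indexing).

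The one point requiring care is the analytic justification of the limit. I would note that for $|q|<1$ both the product and the series converge locally uniformly in $z$ on $\mathbb{C}\setminus\{0\}$ and therefore define holomorphic functions there, which legitimizes both the passage $z\to 1$ and the term-by-term differentiation hidden in the evaluation $-F'(1)$. Equivalently, and perhaps more cleanly, one can run the whole computation in the ring of formal power series in $q$ with coefficients that are rational functions of $z$: dividing by $1-z$ and specializing $z=1$ are then purely algebraic operations and no convergence question arises. I expect this bookkeeping, rather than any single hard estimate, to be the main obstacle.
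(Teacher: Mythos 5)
The paper does not actually prove this theorem: it is quoted as a classical identity of Jacobi, with a pointer to Joichi and Stanton \cite{JS} for a Franklin-type involutive (purely combinatorial) proof. Your derivation takes the other standard route, specializing the Jacobi triple product
$\sum_{n\in\mathbb{Z}}(-1)^n q^{n(n-1)/2}z^n=(1-z)\prod_{m\ge1}(1-q^m)(1-zq^m)(1-z^{-1}q^m)$
at $z\to1$ after dividing by $1-z$, and the computation is correct: the pairing $n\leftrightarrow 1-n$ does fix the exponent and flip the sign, the limit of $F(z)/(1-z)$ is $-F'(1)$, and the folding plus the substitution $k=n-1$ yields $\sum_{k\ge0}(-1)^k(2k+1)q^{k(k+1)/2}$. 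Your analytic justification is fine (locally uniform convergence for $|q|<1$), though in the algebraic variant the coefficients of powers of $q$ are Laurent polynomials in $z$, not general rational functions; divisibility by $1-z$ in $\mathbb{Z}[z,z^{-1}]$ is what makes the specialization legitimate. The trade-off between the two routes is the usual one: the involution gives a bijective explanation of the coefficients $(-1)^k(2k+1)$, while your argument is shorter but rests on the triple product, itself a nontrivial classical input that you assume without proof (acceptable here, since the paper treats results of this kind as citable background). One caveat: your final exponent $k(k+1)/2$ is the correct one (the triangular numbers), whereas the statement as printed in the paper has $q^{k(k-1)/2}$, which cannot be right as it stands (the $k=0$ and $k=1$ terms would both contribute to the constant term, giving $1-3=-2$ instead of $1$); this is a typo in the paper, not a flaw in your argument, but you should not paper over the discrepancy with the phrase ``after re-indexing'' --- no re-indexing of $\sum_{k\ge0}(-1)^k(2k+1)q^{k(k-1)/2}$ recovers the correct series, and it is worth saying explicitly that the intended exponent is $k(k+1)/2$.
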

A Franklin-type involutive proof of Theorem \ref{Jac} was obtained by Joichi and Stanton \cite{JS}. For all other choices of $s$, there is no $q$-series expansion for the infinite product in (\ref{qprod}) with explicit coefficients. However in some cases, the $q$-product in \eqref{qprod} can be expressed as a double series representation of the Hecke-Rogers type (see \cite{And2}). These representations were first obtained by Rogers \cite{Rog} but systematically studied much later by Hecke \cite{Hec}. For instance, when $s=2$, the product \eqref{qprod} admits the following double series representation.   
\begin{theorem}[Hecke, Rogers]\label{s2}
We have
\begin{eqnarray*}
\prod_{j=1}^\infty (1-q^j)^2=\sum_{n=0}^\infty \sum_{-n/2\leq m\leq n/2}(-1)^{n+m}q^{(n^2-3m^2)/2+(n+m)/2}.
\end{eqnarray*}
\end{theorem}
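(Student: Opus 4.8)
The plan is to treat the right-hand side as an indefinite (Hecke--Rogers type) theta series and to reconcile it with the positive-definite series that comes out of Euler's theorem, the point being that the two are equal as power series but are \emph{not} reindexings of one another.

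I would begin by squaring Euler's Pentagonal Number Theorem (Theorem~\ref{Eu}):
\[
\prod_{j=1}^\infty (1-q^j)^2=\sum_{(j,k)\in\mathbb{Z}^2}(-1)^{j+k}\,q^{(3j^2-j)/2+(3k^2-k)/2}.
\]
Here the exponent carries the positive-definite form $\tfrac32(j^2+k^2)$ and $(j,k)$ ranges over the whole lattice, whereas the target exponent $\tfrac12(n^2-3m^2)+\tfrac12(n+m)$ carries the indefinite form $n^2-3m^2$ and $(n,m)$ is confined to the wedge $W=\{(n,m):n\ge0,\ |m|\le n/2\}$ (which is exactly the region forcing $n^2-3m^2\ge n^2/4\ge0$, so that the indefinite series converges). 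Because these two quadratic forms have different determinants and opposite signatures, no unimodular --- indeed, by Sylvester's law, no linear --- substitution can turn one sum into the other; thus the theorem is a genuine identity between two different organizations of the same series, and not a change of variables.

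The route I find most promising is to perform the inner sum over $m$ first. Writing $g(m)=\tfrac12(3m^2-m)$ for the pentagonal quadratic, the right-hand side becomes
\[
\sum_{n\ge0}(-1)^n q^{\binom{n+1}{2}}\sum_{|m|\le n/2}(-1)^m q^{-g(m)},
\]
so that the inner sum is a symmetric truncation of the bilateral pentagonal series $\sum_{m\in\mathbb Z}(-1)^m q^{-g(m)}$, weighted against the triangular prefactor $q^{\binom{n+1}{2}}$. I would then set up a summation-by-parts/telescoping in $n$: viewing the inner sums as partial sums of the fixed sequence $c_m=(-1)^m q^{-g(m)}$ and $(-1)^n q^{\binom{n+1}{2}}$ as the dual factor, Abel summation trades the truncation for tails of the triangular theta, and Euler's theorem (applied in the limit $n\to\infty$) should collapse the whole expression to $(q;q)_\infty^2$. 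Equivalently, one can aim to phrase this as an explicit sign-reversing involution on the squared-Euler lattice $\mathbb{Z}^2$ whose surviving terms biject with $W$.

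I expect the telescoping/involution step to be the main obstacle, precisely because of the signature mismatch noted above: the regrouping is convergence-sensitive, so one must control the indefinite cancellation carefully rather than simply reindex. A clean alternative finish, once the wedge series is shown to be a well-defined modular object, is to complete/regularise it as an indefinite binary theta of signature $(1,1)$ and weight $1$, identify $\prod_{j\ge1}(1-q^j)^2=q^{-1/12}\eta(\tau)^2$ as the matching weight-$1$ form of the same level and character, and then equate the two by comparing finitely many coefficients up to the Sturm bound; the initial coefficients $1,-2,-1,2,1,2,-2,\dots$ already agree, so that last comparison is routine. In either approach the real work is the rearrangement that manufactures the indefinite form and the wedge, whereas the final coefficient matching is the easy part.
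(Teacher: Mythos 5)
The paper does not actually prove Theorem \ref{s2}: it is quoted in the introduction as a known result, with the proofs attributed to Rogers \cite{Rog}, Hecke \cite{Hec}, Kac--Peterson \cite{KP}, Bressoud \cite{Br} and Andrews \cite{And2}. So there is no in-paper argument to compare against, and your proposal has to stand on its own. As written it has a genuine gap: the framing is all correct --- the signature obstruction to a linear substitution, the rewriting of the right-hand side as $\sum_{n\ge0}(-1)^nq^{\binom{n+1}{2}}\sum_{|m|\le n/2}(-1)^mq^{-m(3m-1)/2}$, and the observation that $|m|\le n/2$ is exactly the cone on which $n^2-3m^2\ge 0$ --- but the step that would actually prove the identity, namely the Abel summation or sign-reversing involution that converts the squared pentagonal lattice sum into the wedge sum, is only announced, and you yourself flag it as ``the main obstacle.'' That step is the entire content of the theorem; without an explicit involution on $\mathbb{Z}^2$ whose fixed points biject with the wedge (or an explicit partial-theta identity after interchanging the $m$- and $n$-sums), the argument is a plan rather than a proof.

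The modular fallback does not close the gap either. A Sturm-bound comparison requires knowing in advance that the wedge series is a holomorphic modular form of weight $1$ on a specific group with a specific multiplier. For an indefinite binary theta series of signature $(1,1)$ this is not automatic: such series are generically mock modular and become modular only after a nonholomorphic completion in the sense of Zwegers, or after verifying that the relevant pair of cusps makes the completion term vanish. Establishing that for this particular wedge is essentially as hard as proving the identity itself, so the ``routine'' finite coefficient check rests on an unproven hypothesis. If you want a route that can actually be completed, the standard ones are Andrews' Bailey-pair derivation in \cite{And2} or Bressoud's $q$-series argument in \cite{Br}; either replaces your telescoping step with manipulations that can be justified term by term.
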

Kac and Peterson \cite{KP} also obtained the identity in Theorem \ref{s2} in connection to character formulas for infinite dimensional Lie algebras and string functions. Bressoud \cite{Br} and Andrews \cite{And2} also obtained this identity. In the case $s=24$, the coefficients  of the infinite product in (\ref{qprod}) are closely related to the Ramanujan $\tau$ function. The Ramanujan $\tau$-function can be defined by 
\begin{eqnarray} \label{tau}
\sum_{n=0}^\infty \tau(n)q^n:=q\prod_{j=1}^\infty (1-q^j)^{24}
\end{eqnarray}
The $\tau$-function possess very nice arithmetic properties, see \cite{Ran}. In particular, the $\tau$-function is multiplicative, as originally observed by Ramanujan and later proved by Mordell \cite{Mor}. Lehmer \cite{Leh} conjectured that the $\tau$-function never vanishes which is still open. Dyson \cite{Dy} obtained a nice formula for $\tau(n)$ which has strong combinatorial flavour but not easily seen to be related with a direct counting of special combinatorial objects.

Consider the truncation of the $q$-product in (\ref{qprod}), that is, consider the polynomial
\begin{eqnarray}
T_{s,n}(q):=\prod_{j=1}^n(1-q^j)^s. 
\end{eqnarray}
It is clear that $N_{s,n}:=\;$deg$\;T_{s,n}=n(n+1)s/2$. 
Define the coefficients $t_{i,s,n}$ by
\begin{eqnarray}\label{Tpq}
T_{s,n}(q)&:=&\sum\limits_{i=0}^{N_{s,n}}t_{i,s,n}\;q^{i}.
\end{eqnarray}
$T_{s,n}(q)$ comes up quite naturally in the study of restricted partitions. Although $T_{s,n}(q)$ is the truncation of $T_{s,\infty}(q)$, very little is known about the coefficients in the $q$-series expansion of $T_{s,n}(q)$. For $s=1$, let $\mathcal{M}_n:=\max_{j}|t_{j,1,n}|$. Then Sudler \cite{Sud} showed that
\begin{eqnarray}\label{Sud4}
\log \mathcal{M}_n = Kn+O(\log n),
\end{eqnarray}
for some constant $K\approx 0.19861$. In a subsequent paper, Sudler \cite{Sud1} showed, by establishing an algebraic identity that the coefficients $t_{j,1,n}$ satisfies
\begin{eqnarray}\label{Sud3}
t_{j,1,n}=[q^{j-2n-2}](q^3;q)_\infty + O(1),
\end{eqnarray}
where $2n+2\leq j\leq 3n+2$. By establishing another algebraic identity and Dirichlet's box principle, he showed that $(q^3;q)_\infty$ has unbounded coefficients. This enabled him to show using (\ref{Sud3}) that $t_{j,1,n}$ are unbounded as $n+1>j-2n-2\rightarrow\infty$ as $n\rightarrow\infty$.  In \cite{Wri}, Wright improved the asymptotic estimate in (\ref{Sud4}). By obtaining a new formula for the truncation of the series in Theorem \ref{Eu}, Andrews and Merca \cite{AM} recently obtained infinitely many inequalities for $p(n)$, the unrestricted partition of $n$. For non-negative integers $m, r$, let ${m\brack r}$ denote the $q$-binomial coefficient defined by
\begin{eqnarray}
{m\brack r}=\left\{\begin{array}{cc}
    \dfrac{(1-q^m)(1-q^{m-1})\cdots (1-q^{m-r+1})}{(1-q)(1-q^2)\cdots (1-q^r)}, & \text{if}\;r\leq m, \\
    \mbox{}\\
     0,&\text{otherwise}. 
\end{array}\right.
\end{eqnarray}
Using Cauchy's theorem \cite[Theorem 2.1, pp. 17]{AndB}, it follows for $s=1$ that
\begin{eqnarray}\label{cauchy}
\prod_{j=1}^n (1-q^j)=\sum_{k=0}^n {n\brack k}(-1)^k q^{k(k+1)/2}.
\end{eqnarray}
Since each of ${n\brack k}$ is a polynomial in $q$ of degree $k(n-k)$ with integer coefficients \cite[Theorem 3.2, pp. 35]{AndB}, the right-hand side of (\ref{cauchy}) does not give us an explicit form of the coefficient (independent of $q$) of $q^m$. 

To this end, we adopt the following conventions. Given a polynomial $f(x)$, by $[x^{j}]f(x)$, we denote the coefficient of $x^{j}$ in $f(x)$. Let $d\in\mathbb{Z}$ be such that $0\leq d\leq a-1$. In what follows, let $S_{a,d}$ denote the arithmetic progression
\begin{eqnarray}
S_{a,d}:=\left\{am+d: m\in\mathbb{Z}\right\}.
\end{eqnarray}
The aim of this article is to obtain several results concerning the coefficients $t_{i,s,n}$. Let $N\in\mathbb{N}$ and consider the following sum:
\begin{eqnarray}\label{sumprog}
\sum_{\substack{0\leq i\leq N_{s,n}\\i\in S_{N,d}}}t_{i,s,n}.
\end{eqnarray}
where $0\leq d< N$. We obtain a nice formula involving roots of unity for the sum in (\ref{sumprog}). Under two suitable choices of $N$ (see Section \ref{MR}) we obtain two interesting results. In one case, the sum reduces to a single coefficient of $T_{s,n}(q)$, thereby yielding a formula for the individual coefficients involving roots of unity. In the other case, the sum involving roots of unity simplify and we obtain a nice closed expression for the sum of coefficients. Finally, we use a result due to Sudler \cite{Sud} to obtain an asymptotic estimate for the maximum absolute coefficients of $T_{s,n}(q)$. 

This paper is organized as follows. In Section \ref{NCB} we introduce a few notations, conventions and do some basic counting. In Section \ref{MR} we state our main results. In Section \ref{LWs} we recall Li and Wan's \cite{Li-Wan} sieving principle and also establish a few basic results. Finally in Section \ref{Ps} we obtain the proofs of our main results.
\section{Acknowledgement}
The research of the first author was supported by grant SFB F50-06 of the Austrian Science Fund (FWF). The authors thank George Andrews and Peter Paule for their feedback.
\section{Notation, Conventions and Basic Counting}\label{NCB}
Let $n\in\mathbb{N}$. Consider the set $D_n=\{1,2, \ldots, n\}$. We define the following: 
\begin{eqnarray*}
\C_{e,s}(j,n)&:=&\#\left\{\varprod_{i=1}^s V_i\subset D_n^s:\sum_{i=1}^s|V_i|\equiv 0 \;(\mbox{mod}\;2), \sum_{i=1}^s\sum\limits_{x_{v_i}\in V_i} x_{v_i}= j\right\},\notag\\
\C_{o,s}(j,n)&:=&\#\left\{\varprod_{i=1}^s V_i \subset D_n^s:\sum_{i=1}^s|V_i|\equiv 1\;(\mbox{mod}\;2), \sum_{i=1}^s\sum\limits_{x_{v_i}\in V_i} x_{v_i}= j\right\}.
\end{eqnarray*}
It is now apparent that
\begin{equation}\label{co}
 t_{j,s,n}=\C_{e,s}(j,n)-\C_{o,s}(j,n).
\end{equation}
Combinatorially, $\C_{e,p}(j,n)$ (resp. $\C_{o,p}(j,n)$) counts the number of partitions of $j$ into an even (resp. odd) number of parts $\leq n$ where every part can repeat at most $s$ times. 

Let $N\in\mathbb{N}$ to be chosen appropriately later. Let $G=\mathbb{Z}_{N}$. 
Given $0\leq k_1, k_2,\ldots,k_s \leq |D_{n}|$ and $0\leq j < N$, define
\begin{eqnarray*}
M_{s,n,N}(k_1,k_2,\ldots,k_s,j):=\#\left\{\varprod_{i=1}^sV_i \subset D_n^s:|V_1|=k_1,\ldots,|V_s|=k_s, \sum_{i=1}^s\sum\limits_{x_{v_i}\in V_i} x_{v_i}\equiv j \pmod{N} \right\}, 
\end{eqnarray*}
and set
\begin{eqnarray*}
M_{s,n,N}(b)=\sum_{0\leq k_1,k_2,\ldots,k_s\leq |D_{n}|}(-1)^{k_1+k_2+\ldots+k_s}M_{s,n,N}(k_1,k_2,\ldots,k_s,b).
\end{eqnarray*}
From (\ref{co}), we see that
\begin{equation}\label{ps}
M_{s,n,N}(j)=\sum_{\substack{0\leq i\leq N_{s,n}\\i\in S_{N,j}}}t_{i,s,n}.
\end{equation}
We next introduce a few more notations. Let $(x)_k:=(x-1)(x-2)\ldots(x-k+1)$ denote the falling factorial. Let $\hat{G}$ be the set of complex-valued linear characters of $G$. By $\psi_{0}$, we denote the trivial character in $\hat{G}$. Let $X_{n,k}=D_n^k$ and $\overline{X}_{n,k}$ denote the subset of all tuples in $D_n^{k}$ with distinct coordinates. 
\section{Main results}\label{MR}
Our main results are below. 
\begin{theorem}\label{main00}
Let $j\in \mathbb{Z}_{N}$. Then we have 
\begin{eqnarray*}
M_{s,n,N}(j)=\sum_{\substack{0\leq i\leq N_{s,n}\\i\in S_{N,j}}} [q^{i}]T_{n,s}(q)=\dfrac{1}{N}\sum_{\psi\neq \psi_0}\psi^{-1}(j)\prod_{a\in D_n}(1-\psi(-a))^s
\end{eqnarray*}
where $\psi_0\neq \psi\in\hat{\mathbb{Z}}_{N}$.
\end{theorem}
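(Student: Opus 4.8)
The plan is to prove the nontrivial (third) expression directly from the combinatorial definition of $M_{s,n,N}(j)$, since the first equality is exactly the content of \eqref{ps} and requires nothing new. The engine will be the orthogonality relation for the character group $\hat G$ of $G=\mathbb{Z}_{N}$, namely $\frac1N\sum_{\psi\in\hat G}\psi(x)=1$ when $x\equiv 0\pmod N$ and $0$ otherwise. I would first use this to replace the congruence condition $\sum_{i=1}^s\sum_{x_{v_i}\in V_i}x_{v_i}\equiv j\pmod N$ appearing in the definition of $M_{s,n,N}(k_1,\dots,k_s,j)$ by $\frac1N\sum_{\psi\in\hat G}\psi^{-1}(j)\,\psi\bigl(\sum_{i}\sum_{x_{v_i}\in V_i}x_{v_i}\bigr)$, and then assemble the signed sum defining $M_{s,n,N}(j)$.

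Next I would interchange the (finite) sum over $\psi$ with the sum over the size vector $(k_1,\dots,k_s)$ and the sum over the tuples $\varprod_{i}V_i$. Since $\psi$ is multiplicative, $\psi\bigl(\sum_i\sum_{x_{v_i}\in V_i}x_{v_i}\bigr)=\prod_{i=1}^s\prod_{x_{v_i}\in V_i}\psi(x_{v_i})$, so the inner signed sum factors over the $s$ coordinates and, within each coordinate, over the elements of $D_n$. The key identity is the elementary expansion
\[
\sum_{V\subseteq D_n}(-1)^{|V|}\prod_{v\in V}\psi(v)=\prod_{a\in D_n}\bigl(1-\psi(a)\bigr),
\]
which, applied independently to each of the $s$ factors (the global sign $(-1)^{k_1+\dots+k_s}$ distributing as one $(-1)^{|V_i|}$ per factor), collapses the whole signed subset-tuple sum to $\prod_{a\in D_n}(1-\psi(a))^{s}$. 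This is precisely the place where the Li--Wan bookkeeping of Section \ref{LWs} organizes the passage from the size-restricted counts to the product; a reindexing $\psi\mapsto\psi^{-1}$ (equivalently $a\mapsto -a$ on $D_n$, using $\psi(-a)=\overline{\psi(a)}=\psi(a)^{-1}$) then recasts the factor into the stated form $\prod_{a\in D_n}(1-\psi(-a))^{s}$.

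It remains to handle the trivial character. For $\psi=\psi_0$ each factor is $1-\psi_0(\pm a)=0$, so its contribution vanishes whenever $s\ge 1$; this is why the sum may be written over $\psi\neq\psi_0$ without changing its value, and why the single term $r=0$ of the ordinary roots-of-unity filter drops out. A quick cross-check is the purely analytic route: substituting $q=\zeta^{r}$ (with $\zeta=e^{2\pi i/N}$) into $T_{s,n}(q)=\prod_{a\in D_n}(1-q^{a})^{s}$ and applying $\frac1N\sum_{r=0}^{N-1}\zeta^{r(i-j)}=\mathbf 1[i\equiv j]$ yields the same identity, with $\zeta^{r}$ playing the role of $\psi(1)$. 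I expect no genuine analytic obstacle here; the only real care is needed in the factorization/interchange step and, more delicately, in matching the sign conventions so that the exponent $\psi^{-1}(j)$ pairs correctly with $\prod_{a\in D_n}(1-\psi(-a))^{s}$. Keeping track of whether the reindexing $\psi\mapsto\psi^{-1}$ has been applied to the character argument, to the group element $j$, or to both, is the one point at which it is easy to slip.
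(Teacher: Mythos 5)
Your strategy is sound and is genuinely different from the paper's. The paper also begins with character orthogonality, but then routes everything through the Li--Wan sieve: each subset $V_i$ of size $k_i$ is represented by its $k_i!$ ordered tuples with distinct coordinates, $F_{\psi}(\overline{X}_{n,k})$ is computed via the cycle index of $S_k$ (Proposition \ref{LWS}, Lemmas \ref{Ftau}--\ref{Hfines}) and an exponential generating function, and only at the end is the $(-1)^{k_1+\cdots+k_s}$-weighted sum over the size vector performed. Your observation that all of this can be bypassed --- because the signed sum over \emph{all} subsets factors immediately as
\[
\sum_{V\subseteq D_n}(-1)^{|V|}\prod_{v\in V}\psi(v)=\prod_{a\in D_n}\bigl(1-\psi(a)\bigr)
\]
--- is correct and much shorter: the distinct-coordinate machinery is only needed when the subset sum is genuinely restricted by cardinality, and here the cardinality restriction disappears once the signed sum over $(k_1,\dots,k_s)$ is taken. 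Discarding $\psi_0$ because $\prod_{a}(1-\psi_0(a))^s=0$ is also exactly right, and your roots-of-unity cross-check ($q=\zeta^r$ in $T_{s,n}(q)$) is the cleanest way to see the whole theorem.

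The one step that does not go through is the final ``reindexing.'' Your computation lands on
\[
M_{s,n,N}(j)=\frac1N\sum_{\psi\neq\psi_0}\psi^{-1}(j)\prod_{a\in D_n}\bigl(1-\psi(a)\bigr)^s,
\]
and substituting $\psi\mapsto\psi^{-1}$ turns this into $\frac1N\sum_{\psi\neq\psi_0}\psi(j)\prod_{a}(1-\psi(-a))^s$: the argument of $j$ flips along with the argument of $a$, so you cannot reach the printed form $\psi^{-1}(j)\prod_{a}(1-\psi(-a))^s$ by any such substitution. Indeed the two expressions differ: for $s=n=1$, $N=3$, $j=1$ one has $M_{1,1,3}(1)=t_{1,1,1}=-1$; your formula gives $-1$, while the formula as printed in Theorem \ref{main00} evaluates to $0$. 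The discrepancy is not a flaw in your argument but in the paper's: in the proof of Lemma \ref{speZ1} the integral $\int_0^u dx/(x-\psi(-a))$ is evaluated as $\log(u-\psi(-a))$, discarding the lower-limit contribution $-\log(-\psi(-a))$; the correct conclusion there is $Z_k(\cdots)=\left[u^k/k!\right]\prod_{a\in D}(1-\psi(a)u)$, which propagates to exactly your version of the identity. So you should keep your formula as is (or, equivalently, write $\psi(j)$ paired with $\prod_a(1-\psi(-a))^s$), rather than force agreement with the literal statement.
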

\begin{theorem}\label{main0000}
Let $j\in \mathbb{Z}_{N}$. Then we have 
\begin{eqnarray*}
 M_{s,n,N}(j)=\left\{\begin{array}{cc}
 \dfrac{(2i)^{sn+1}}{N}\displaystyle\sum_{1\leq r\leq N/2}\sin\left(\pi r\cdot\dfrac{2j-sn(n+1)/2}{N}\right)\displaystyle\prod_{a\in D_n}\sin^s\left(\dfrac{\pi a r}{N}\right),& \emph{if}\;sn\equiv 1\;(\emph{mod}\;2)\\
 \dfrac{2(2i)^{sn}}{N}\displaystyle\sum_{1\leq r\leq N/2}\cos\left(\pi r\cdot\dfrac{2j-sn(n+1)/2}{N}\right)\displaystyle\prod_{a\in D_n}\sin^s\left(\dfrac{\pi a r}{N}\right),&\emph{otherwise}.
 \end{array}\right.
 \end{eqnarray*}
\end{theorem}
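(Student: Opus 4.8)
The plan is to specialise the character sum of Theorem~\ref{main00} to the explicit characters of $\mathbb{Z}_N$ and convert the resulting complex product into a trigonometric one via a half--angle factorisation; the two cases in the statement will then come solely from the parity of $sn$. Writing the nontrivial characters as $\psi_r(x)=e^{2\pi i rx/N}$ for $1\le r\le N-1$ and putting $\omega=e^{2\pi i/N}$, Theorem~\ref{main00} gives
\[
M_{s,n,N}(j)=\frac1N\sum_{r=1}^{N-1}\omega^{-rj}\prod_{a\in D_n}\bigl(1-\omega^{ra}\bigr)^{s},
\]
the term $r=0$ being absent since $1-\omega^{0}=0$. First I would apply the identity $1-e^{i\theta}=-2i\,e^{i\theta/2}\sin(\theta/2)$ to each factor with $\theta=2\pi ra/N$. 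This extracts the constant $(-2i)^{sn}$, the phase $\exp\bigl(i\pi r\,s\sum_{a\in D_n}a/N\bigr)=\exp(i\pi rc/N)$ with $c:=s\,n(n+1)/2=N_{s,n}$, and the real quantity $P_r:=\prod_{a\in D_n}\sin^{s}(\pi ra/N)$. Absorbing $\omega^{-rj}=e^{-i\pi r\cdot 2j/N}$ into the phase collapses the expression to
\[
M_{s,n,N}(j)=\frac{(-2i)^{sn}}{N}\sum_{r=1}^{N-1}e^{-i\pi r\gamma/N}\,P_r,\qquad \gamma:=2j-\tfrac{s\,n(n+1)}{2}.
\]

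The second step is to fold the sum using the reflection $r\mapsto N-r$. The elementary identity $\sin\bigl(\pi(N-r)a/N\bigr)=(-1)^{a+1}\sin(\pi ra/N)$ gives $P_{N-r}=(-1)^{c+sn}P_r$, while $e^{-i\pi(N-r)\gamma/N}=(-1)^{c}e^{i\pi r\gamma/N}$ because $e^{-i\pi\gamma}=e^{-2\pi i j}e^{i\pi c}=(-1)^{c}$. Multiplying these, the $(N-r)$-th summand becomes $(-1)^{sn}e^{i\pi r\gamma/N}P_r$, the two factors $(-1)^{c}$ cancelling. Hence pairing the $r$-th and $(N-r)$-th summands yields $\bigl(e^{-i\pi r\gamma/N}+(-1)^{sn}e^{i\pi r\gamma/N}\bigr)P_r$, which equals $-2i\sin(\pi r\gamma/N)P_r$ when $sn$ is odd and $2\cos(\pi r\gamma/N)P_r$ when $sn$ is even. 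Running $r$ over $1\le r\le N/2$ instead of $1\le r\le N-1$ and folding in the remaining constants---$(-2i)^{sn}(-2i)=(2i)^{sn+1}$ in the odd case and $(-2i)^{sn}\cdot 2=2(2i)^{sn}$ in the even case, each simplification using the parity of $sn$---reproduces the two displayed formulae.

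I expect the main difficulty to be the sign and parity bookkeeping rather than any conceptual hurdle. One must track three independent powers of $-1$---the one from $P_{N-r}/P_r$, the one from $e^{-i\pi\gamma}$, and the one concealed in $(-2i)^{sn}=(-1)^{sn}(2i)^{sn}$---and check that they conspire to leave exactly the single factor $(-1)^{sn}$ that toggles sine against cosine; here the relation $c=s\,n(n+1)/2$, which makes $n(n+1)$ even and hence $(-1)^{2c}=1$, is used repeatedly. A secondary point needing care is the self-paired index $r=N/2$ when $N$ is even: one checks that $P_{N/2}=\prod_{a\in D_n}\sin^{s}(\pi a/2)=0$ as soon as some even $a\le n$ occurs (i.e.\ for $n\ge 2$), so that including the endpoint $r=N/2$ in the range $1\le r\le N/2$ costs nothing and the pairing argument applies verbatim.
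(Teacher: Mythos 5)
Your proposal is correct and follows essentially the same route as the paper: specialize the character sum of Theorem \ref{main00} to $\psi_r(x)=e^{2\pi i rx/N}$, pull a factor $\sin(\pi ra/N)$ out of each term by the half-angle factorization, pair each character with its complex conjugate (your reflection $r\mapsto N-r$ over $1\le r\le N-1$ is the paper's pairing $r\mapsto -r$ over the symmetric range $-N/2\le r\le N/2$), and dispose of the self-paired index $r=N/2$ exactly as the paper does, via the vanishing of $\sin(\pi ar/N)$ at an even $a\in D_n$. The only difference is cosmetic bookkeeping: your asymmetric range introduces the extra factors $(-1)^{c}$ from reflecting about $N/2$ rather than $0$, which you correctly verify cancel, whereas the paper's symmetric range produces the single factor $(-1)^{sn}$ directly from $\sin(-x)=-\sin x$.
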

\begin{theorem}\label{main000}
Let $sn\equiv 1\;(\emph{mod}\;2)$. For a given $j\in\mathbb{Z}_{N}$, let $N-1\leq N_{s,n}$ be such that $2j\equiv N_{s,n}\;(\emph{mod}\;N)$. Then
\begin{eqnarray*}
M_{s,n,N}(j)=\sum_{\substack{0\leq i\leq N_{s,n}\\i\in S_{N, j}}} t_{i,s,n}=0.
\end{eqnarray*}
In particular, let $N'\mid N_{s,n}$. Then 
\begin{eqnarray*}
M_{s,n,N'}(0)=\sum_{0\leq \ell\leq N_{s,n}/N'}t_{\ell N',s,n}=0.
\end{eqnarray*}
\end{theorem}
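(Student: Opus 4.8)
The plan is to deduce this statement directly from Theorem \ref{main0000}, the closed trigonometric evaluation of $M_{s,n,N}(j)$, by exploiting the arithmetic hypothesis on $j$. First I would record the bookkeeping identity $sn(n+1)/2 = N_{s,n}$, which is immediate from the definition $N_{s,n} = n(n+1)s/2$. This lets me recognize that the quantity $2j - sn(n+1)/2$ sitting inside the sine in Theorem \ref{main0000} is exactly $2j - N_{s,n}$, the quantity controlled by the hypothesis.

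Since we are in the case $sn \equiv 1 \pmod 2$, Theorem \ref{main0000} supplies
\begin{eqnarray*}
M_{s,n,N}(j) = \frac{(2i)^{sn+1}}{N} \sum_{1 \leq r \leq N/2} \sin\left(\pi r \cdot \frac{2j - N_{s,n}}{N}\right) \prod_{a \in D_n} \sin^s\left(\frac{\pi a r}{N}\right).
\end{eqnarray*}
The hypothesis $2j \equiv N_{s,n} \pmod N$ says precisely that $(2j - N_{s,n})/N$ is an integer, say $m$. Then for every index $r$ in the outer sum the argument of the sine equals $\pi r m$ with $rm \in \mathbb{Z}$, so $\sin(\pi r m) = 0$. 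Hence every summand vanishes and $M_{s,n,N}(j) = 0$, which is the first assertion.

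For the ``in particular'' clause I would specialize $N = N'$ and $j = 0$. The divisibility $N' \mid N_{s,n}$ gives $2\cdot 0 = 0 \equiv N_{s,n} \pmod{N'}$, so the congruence hypothesis holds automatically, while $N' - 1 \leq N_{s,n}$ is immediate for a divisor of $N_{s,n}$; the standing parity assumption $sn \equiv 1 \pmod 2$ is unaffected by the choice of modulus. The first part then yields $M_{s,n,N'}(0) = 0$. It remains only to rewrite the indexing set: by \eqref{ps} we have $M_{s,n,N'}(0) = \sum_{0 \leq i \leq N_{s,n},\, i \in S_{N',0}} t_{i,s,n}$, and since $S_{N',0} = \{N' m : m \in \mathbb{Z}\}$, the constraint $0 \leq i \leq N_{s,n}$ with $i \in S_{N',0}$ is equivalent to $i = \ell N'$ for $0 \leq \ell \leq N_{s,n}/N'$, giving the displayed sum.

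The argument carries essentially no obstacle once Theorem \ref{main0000} is available; the only points requiring care are the identity $sn(n+1)/2 = N_{s,n}$ and the observation that the congruence becomes automatic under $N' \mid N_{s,n}$. The substantive content lives entirely in the earlier theorem, whose trigonometric form renders the vanishing transparent: the hypothesis is engineered exactly so that the phase factor in the sine collapses to an integer multiple of $\pi$.
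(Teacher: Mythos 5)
Your proof is correct and follows essentially the same route as the paper: both invoke the trigonometric formula of Theorem \ref{main0000} in the case $sn\equiv 1\pmod 2$ and observe that the hypothesis $2j\equiv N_{s,n}\pmod N$ forces every factor $\sin\bigl(\pi r(2j-N_{s,n})/N\bigr)$ to vanish, with the second claim obtained by specializing to $j=0$ and $N=N'$. Your write-up is in fact slightly more careful than the paper's, making explicit the identity $sn(n+1)/2=N_{s,n}$ and the re-indexing of the sum via \eqref{ps}.
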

\begin{theorem}\label{main0}
Let $j\in \mathbb{Z}_{N_{s,n}+1}$. Then we have 
\begin{eqnarray*}
t_{j,s,n}=\dfrac{1}{N_{s,n}+1}\sum_{\psi\neq \psi_0}\psi^{-1}(j)\prod_{a\in D_n}(1-\psi(-a))^s
\end{eqnarray*}
where $\psi_0\neq \psi\in\hat{\mathbb{Z}}_{N_{s,n}+1}$.
\end{theorem}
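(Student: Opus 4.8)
The plan is to derive Theorem \ref{main0} as a specialization of the general formula in Theorem \ref{main00}, by choosing the modulus $N$ just large enough that each arithmetic progression $S_{N,d}$ meets the support of the polynomial $T_{s,n}(q)$ in at most a single index. Concretely, I would set $N = N_{s,n}+1$ and feed this choice into Theorem \ref{main00}.

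First I would recall that $T_{s,n}(q)$ is a polynomial of degree exactly $N_{s,n} = n(n+1)s/2$, so that $t_{i,s,n} = [q^i]T_{s,n}(q)$ vanishes unless $0 \le i \le N_{s,n}$. With the choice $N = N_{s,n}+1$, the relevant index set becomes $\{0,1,\ldots,N-1\}$, which is precisely a complete set of residues modulo $N$. The key elementary observation is then that for each fixed $j$ with $0 \le j \le N-1$, the arithmetic progression
\[
S_{N,j} = \{\,Nm + j : m \in \mathbb{Z}\,\}
\]
intersects $\{0,1,\ldots,N_{s,n}\} = \{0,1,\ldots,N-1\}$ in exactly one point, namely $i = j$ itself, since the next smallest nonnegative member $j+N$ already exceeds $N_{s,n}$. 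Hence the defining sum of $M_{s,n,N}(j)$ in \eqref{ps} collapses:
\[
M_{s,n,N}(j) = \sum_{\substack{0 \le i \le N_{s,n} \\ i \in S_{N,j}}} t_{i,s,n} = t_{j,s,n}.
\]

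With this reduction in hand, I would simply substitute $N = N_{s,n}+1$ into the closed form supplied by Theorem \ref{main00}, which equates $M_{s,n,N}(j)$ with
\[
\frac{1}{N}\sum_{\psi \neq \psi_0}\psi^{-1}(j)\prod_{a\in D_n}(1-\psi(-a))^s,
\]
the sum ranging over the nontrivial characters of $\hat{\mathbb{Z}}_N$. Replacing $N$ by $N_{s,n}+1$ and using the collapse $M_{s,n,N}(j) = t_{j,s,n}$ then yields exactly the asserted identity, with the characters now running over $\hat{\mathbb{Z}}_{N_{s,n}+1}$.

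I do not expect any serious obstacle here, as the statement is essentially a corollary of Theorem \ref{main00}; the only point requiring care is verifying that the progression meets the degree range in a single index, which is what forces the partial sum to reduce to one coefficient. The genuine analytic and combinatorial content has already been absorbed into Theorem \ref{main00}, so the remaining work is the bookkeeping of this specialization.
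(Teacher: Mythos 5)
Your proposal is correct and is exactly the paper's own argument: the paper also takes $N=N_{s,n}+1$ so that the partial sum in \eqref{ps} collapses to the single coefficient $t_{j,s,n}$, and then invokes Theorem \ref{main00}. You have merely spelled out the collapse of the progression $S_{N,j}$ more explicitly than the paper does.
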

\begin{theorem}\label{main1}
Let $j\in\mathbb{Z}_{n+1}$ we have
\begin{eqnarray*}
M_{s,n,n}(j)=\sum_{\substack{0\leq i\leq N_{s,n}\\i\in S_{n+1,j}}} t_{i,s,n}=\left\{\begin{array}{cc}
(n+1)^{s-1}\varphi(n+1),&\emph{if}\;j=0,\\
\mbox{}\\
-(n+1)^{s-1},&\emph{otherwise.}
\end{array}\right.
\end{eqnarray*}
\end{theorem}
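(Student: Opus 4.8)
The plan is to specialize Theorem~\ref{main00} to $N=n+1$ and to evaluate the resulting character sum explicitly. Writing $\zeta=e^{2\pi i/(n+1)}$, the nontrivial characters of $\mathbb{Z}_{n+1}$ are $\psi_k(a)=\zeta^{ka}$ for $k=1,\dots,n$, so Theorem~\ref{main00} gives
\[
M_{s,n,n+1}(j)=\frac{1}{n+1}\sum_{k=1}^{n}\zeta^{-kj}\prod_{a\in D_n}\bigl(1-\zeta^{-ka}\bigr)^{s}.
\]
The first step is the elementary observation that, modulo $n+1$, the map $a\mapsto -a$ permutes $D_n=\{1,\dots,n\}$ onto itself, since $\{-1,-2,\dots,-n\}\equiv\{n,n-1,\dots,1\}\pmod{n+1}$. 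Hence for each fixed $k$ the inner product equals $\prod_{c=1}^{n}(1-\zeta^{kc})^{s}$, i.e.\ the $s$-th power of the value at $x=1$ of $\prod_{c=1}^{n}(x-\zeta^{kc})$.

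The heart of the argument is a dichotomy on $k$. If $\gcd(k,n+1)=g>1$, then $\zeta^{k}$ has order $(n+1)/g<n+1$, so $\zeta^{kc}=1$ for $c=(n+1)/g\in\{1,\dots,n\}$; the factor $1-\zeta^{kc}$ then vanishes and the whole product is $0$. If instead $\gcd(k,n+1)=1$, then $\zeta^{k}$ is a primitive $(n+1)$-th root of unity, so $\{\zeta^{kc}:0\le c\le n\}$ runs over all $(n+1)$-th roots of unity; consequently $\prod_{c=1}^{n}(x-\zeta^{kc})=(x^{n+1}-1)/(x-1)=1+x+\cdots+x^{n}$, which at $x=1$ equals $n+1$. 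Thus the inner product is $(n+1)^{s}$ for the primitive characters and $0$ otherwise, leaving
\[
M_{s,n,n+1}(j)=(n+1)^{s-1}\sum_{\substack{1\le k\le n\\ \gcd(k,n+1)=1}}\zeta^{-kj}.
\]

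It then remains to evaluate the restricted exponential sum, which is a Ramanujan sum. For $j=0$ this is simply the number of admissible $k$, namely $\varphi(n+1)$, yielding the first branch. For $j\neq 0$ the evaluation is the delicate point, and this is where I expect the main obstacle to lie: here I would invoke that $n+1$ is prime, so that every $k\in\{1,\dots,n\}$ is coprime to $n+1$ and the coprimality restriction disappears; orthogonality of characters then gives $\sum_{k=1}^{n}\zeta^{-kj}=\bigl(\sum_{k=0}^{n}\zeta^{-kj}\bigr)-1=-1$, producing the second branch $-(n+1)^{s-1}$. I would emphasize that primality of $n+1$ is essential precisely here: for composite $n+1$ the surviving sum is a genuine Ramanujan sum depending on $\gcd(j,n+1)$ rather than the constant $-1$ (for instance $n+1=4$, $j=2$ already gives $-2$), so the clean dichotomy in the statement should be read under the hypothesis that $n+1$ is prime, which is exactly the case in which $\mathbb{Z}_{n+1}$ is a field and every nontrivial character is faithful.
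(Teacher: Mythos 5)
Your argument follows exactly the same route as the paper's proof: specialize Theorem \ref{main00} to $N=n+1$, observe that $\prod_{a\in D_n}(1-\psi(-a))$ vanishes whenever $o(\psi)<n+1$ (since then $o(\psi)\in D_n$ and the corresponding factor is zero) and equals $n+1$ when $o(\psi)=n+1$, and then evaluate the surviving sum $\sum_{o(\psi)=n+1}\psi^{-1}(j)$ over the $\varphi(n+1)$ primitive characters. The only point of divergence is the last step, and you are right to flag it as the delicate one: the paper simply asserts that this sum equals $\varphi(n+1)$ for $j=0$ and $-1$ otherwise, with no justification, whereas it is in fact the Ramanujan sum $c_{n+1}(j)=\mu\bigl(\tfrac{n+1}{(j,n+1)}\bigr)\varphi(n+1)\big/\varphi\bigl(\tfrac{n+1}{(j,n+1)}\bigr)$, which equals $-1$ for \emph{every} nonzero $j$ only when $n+1$ is prime. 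Your counterexample is genuine and easily checked against the definition: for $n=3$, $s=1$ one has $T_{1,3}(q)=1-q-q^2+q^4+q^5-q^6$, so $M_{1,3,4}(1)=t_1+t_5=0$, $M_{1,3,4}(2)=t_2+t_6=-2$, and $M_{1,3,4}(3)=t_3=0$, none of which is $-1$; these values agree with $(n+1)^{s-1}c_{n+1}(j)$. So the additional hypothesis you impose is not an artifact of your write-up but a necessary correction to the statement: as printed, Theorem \ref{main1} (and the paper's own proof of it) is valid only for $n+1$ prime, and in the composite case the second branch should read $(n+1)^{s-1}c_{n+1}(j)$. (A minor further point: the paper's subscript $M_{s,n,n}$ is inconsistent with its own definition, under which the modulus here is $n+1$; your notation $M_{s,n,n+1}$ is the correct reading.)
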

\begin{cor}\label{main00cor}
Let $1\leq N\leq n-1$. Then for any $j\in \mathbb{Z}_{N}$ we have
\begin{eqnarray*}
M_{s,n}(j)=\sum_{\substack{0\leq i\leq N_{s,n}\\i\in S_{N,j}}}t_{i,s,n}=0.
\end{eqnarray*}
\end{cor}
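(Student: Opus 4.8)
The plan is to derive this as an immediate consequence of Theorem \ref{main00}, which expresses
\[
M_{s,n}(j)=\frac{1}{N}\sum_{\psi\neq\psi_0}\psi^{-1}(j)\prod_{a\in D_n}(1-\psi(-a))^s.
\]
The key observation is that when $1\leq N\leq n-1$, the product $\prod_{a\in D_n}(1-\psi(-a))^s$ vanishes for \emph{every} nontrivial character $\psi\in\hat{\mathbb{Z}}_N$. Hence each summand on the right is zero, and the entire sum collapses to $0$, giving the claim.

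To justify the vanishing of the product, I would argue as follows. Fix a nontrivial $\psi$. Since $\hat{\mathbb{Z}}_N$ is cyclic of order $N$, we may write $\psi(1)=\zeta$, a primitive $d$-th root of unity for some divisor $d\mid N$ with $d>1$ (nontriviality forces $d>1$), so that $\psi(-a)=\zeta^{-a}$. The factor $(1-\psi(-a))$ vanishes precisely when $\zeta^{-a}=1$, i.e. when $d\mid a$. The crucial point is that the index set $D_n=\{1,2,\ldots,n\}$ contains such a multiple of $d$: since $d\leq N\leq n-1<n$, the value $a=d$ lies in $D_n$. For this value we get the factor $(1-\psi(-d))=(1-\zeta^{-d})=(1-1)=0$, so the whole product is zero.

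Combining the two steps finishes the argument: for $1\leq N\leq n-1$ every nontrivial character contributes a zero product, so $M_{s,n}(j)=\tfrac{1}{N}\sum_{\psi\neq\psi_0}\psi^{-1}(j)\cdot 0=0$ for each $j\in\mathbb{Z}_N$, as desired. I do not expect any genuine obstacle here; the only point requiring care is the range condition. One should check that $N\leq n-1$ (rather than merely $N\leq n$) is exactly what guarantees that the order $d$ of $\zeta$, which can be as large as $N$ itself, still satisfies $d\leq n-1 < n$ and hence lands inside $D_n$. If $N$ were allowed to equal $n+1$, the smallest relevant multiple of $d$ could fall outside $D_n$, which is precisely why the boundary cases $N=n$ and $N=n+1$ are handled separately in Theorems \ref{main1} and \ref{main0} rather than by this corollary.
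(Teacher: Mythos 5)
Your argument is correct and is exactly the intended one: the paper gives no explicit proof of this corollary, but the same vanishing observation appears verbatim as equation (\ref{zerocha}) in the proof of Theorem \ref{main1} (a nontrivial $\psi$ of order $d\le N\le n-1$ forces the factor $1-\psi(-d)$ to vanish since $d\in D_n$). Your closing remark slightly misattributes the boundary cases (Theorem \ref{main1} is the case $N=n+1$, not $N=n$, and the argument in fact still works for $N=n$), but this does not affect the proof.
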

\begin{cor}\label{div1}
For positive odd integers $s$ and $n$, let $D$ be any divisor of $N_{s,n}$ such that $N_{s,n}/2<D\leq N_{s,n}$. Then
\begin{eqnarray*}
t_{D,s,n}=-1,\hspace{1cm}\;t_{(N_{s,n}-D),s,n}=1.
\end{eqnarray*}
\end{cor}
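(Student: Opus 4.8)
The plan is to deduce the corollary directly from the second (``in particular'') assertion of Theorem \ref{main000}, applied with modulus $N'=D$. Since $s$ and $n$ are both odd, $sn$ is odd, so the hypothesis $sn\equiv 1\ (\mathrm{mod}\ 2)$ is met; and because $D\mid N_{s,n}$ we may take $N'=D$ to get $M_{s,n,D}(0)=\sum_{0\le\ell\le N_{s,n}/D}t_{\ell D,s,n}=0$. The first thing I would record is the elementary arithmetic fact that a divisor $D$ of $N_{s,n}$ with $N_{s,n}/2<D\le N_{s,n}$ forces the cofactor $N_{s,n}/D$ to be a positive integer strictly smaller than $2$, hence equal to $1$; equivalently, the only nonnegative multiples of $D$ not exceeding $N_{s,n}$ are $0$ and $D$ itself. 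Consequently the displayed sum collapses to just the two terms $t_{0,s,n}+t_{D,s,n}$.

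With this collapse in hand, the identity $M_{s,n,D}(0)=0$ reads $t_{0,s,n}+t_{D,s,n}=0$. Next I would note that the constant term $t_{0,s,n}$ is the constant coefficient of $\prod_{j=1}^n(1-q^j)^s$, which is $1$, so $t_{D,s,n}=-1$, yielding the first claimed equality. For the companion coefficient $t_{N_{s,n}-D,s,n}$ I would invoke the anti-palindromic symmetry of $T_{s,n}$: substituting $q\mapsto 1/q$ in $T_{s,n}(q)=\prod_{j=1}^n(1-q^j)^s$ and clearing denominators gives $q^{N_{s,n}}T_{s,n}(1/q)=(-1)^{sn}T_{s,n}(q)$, whence $t_{N_{s,n}-i,s,n}=(-1)^{sn}t_{i,s,n}$ for every $i$. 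Since $sn$ is odd this reads $t_{N_{s,n}-i,s,n}=-t_{i,s,n}$, and taking $i=D$ gives $t_{N_{s,n}-D,s,n}=-t_{D,s,n}=1$.

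There is no serious obstacle here: the analytic weight has already been carried by Theorem \ref{main000}, whose own vanishing ultimately rests on the same reciprocal symmetry. The only points requiring care are the divisor arithmetic that reduces the sum to two terms and the clean derivation of $t_{N_{s,n}-i,s,n}=(-1)^{sn}t_{i,s,n}$. As a sanity check I would also remark that the hypotheses actually pin $D$ down to $D=N_{s,n}$, so the two asserted values are precisely the leading and constant coefficients of $T_{s,n}(q)$, namely $(-1)^{sn}=-1$ and $1$; the role of the corollary is thus to exhibit these as a transparent special case of the general vanishing phenomenon of Theorem \ref{main000}.
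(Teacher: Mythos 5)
Your proposal is correct and follows essentially the same route as the paper: apply the ``in particular'' part of Theorem \ref{main000} with $N'=D$, observe that $N_{s,n}/2<D\leq N_{s,n}$ collapses the vanishing sum to $t_{0,s,n}+t_{D,s,n}=0$, and then use the anti-palindromic relation $q^{N_{s,n}}T_{s,n}(1/q)=(-1)^{sn}T_{s,n}(q)$ with $sn$ odd to get the companion value. Your closing remark that the hypotheses actually force $D=N_{s,n}$ (so the corollary only concerns the constant and leading coefficients) is a correct observation that the paper does not make, but it does not change the argument.
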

\begin{cor}\label{peak1}
Let $n\equiv 3\;\pmod 4$ and $s$ be an odd integer. Then 
\begin{eqnarray*}
t_{\frac{sn(n+1)}{4},s,n}=0.
\end{eqnarray*}
\end{cor}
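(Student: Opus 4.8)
The plan is to exploit the reciprocal symmetry of the polynomial $T_{s,n}(q)$, since the index $\tfrac{sn(n+1)}{4}=N_{s,n}/2$ is exactly the middle degree. First I would check that this index is a genuine integer: because $n\equiv 3\pmod 4$ we have $4\mid(n+1)$, hence $4\mid n(n+1)$, and as $s$ is odd this forces $N_{s,n}=sn(n+1)/2$ to be even, so $N_{s,n}/2$ is an integer lying in $\{0,\dots,N_{s,n}\}$.

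Next I would establish the functional equation satisfied by $T_{s,n}$. Replacing $q$ by $q^{-1}$ in $\prod_{j=1}^n(1-q^j)^s$, pulling the factor $q^{-js}$ out of each bracket together with the sign $(-1)^s$, a short computation gives $q^{N_{s,n}}T_{s,n}(q^{-1})=(-1)^{sn}\,T_{s,n}(q)$. Comparing the coefficient of $q^{N_{s,n}-i}$ on both sides yields the reciprocity relation $t_{N_{s,n}-i,s,n}=(-1)^{sn}\,t_{i,s,n}$ for every $i$.

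Now, since both $s$ and $n$ are odd, $(-1)^{sn}=-1$, so $T_{s,n}$ is anti-palindromic: $t_{N_{s,n}-i,s,n}=-t_{i,s,n}$. Setting $i=N_{s,n}/2=sn(n+1)/4$ gives $t_{sn(n+1)/4,s,n}=-t_{sn(n+1)/4,s,n}$, hence $t_{sn(n+1)/4,s,n}=0$, as claimed.

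I expect no genuine obstacle here; the only point requiring care is the bookkeeping of parities, which is exactly why the hypothesis $n\equiv 3\pmod 4$ together with $s$ odd appears — it simultaneously guarantees that the middle index is integral and that the sign $(-1)^{sn}$ equals $-1$. Alternatively, the same conclusion can be read off directly from Theorem \ref{main000}: taking $N=N_{s,n}+1$ and $j=N_{s,n}/2$ satisfies $sn\equiv 1\pmod 2$ and $2j=N_{s,n}\equiv N_{s,n}\pmod N$, so that theorem yields $M_{s,n,N_{s,n}+1}(N_{s,n}/2)=0$; and by Theorem \ref{main0} this sum over the progression is the single coefficient $t_{sn(n+1)/4,s,n}$.
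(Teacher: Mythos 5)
Your argument is correct, and your primary route is genuinely different from --- and more elementary than --- the paper's. The paper first invokes Theorem \ref{main000} with $N'=N_{s,n}/2$ (which divides $N_{s,n}$ precisely because $n\equiv 3\pmod 4$ makes $N_{s,n}$ even) to obtain the three-term relation $t_{0,s,n}+t_{N_{s,n}/2,s,n}+t_{N_{s,n},s,n}=0$, and only then uses the anti-palindromic functional equation $q^{N_{s,n}}T_{s,n}(1/q)=(-1)^{sn}T_{s,n}(q)=-T_{s,n}(q)$ to evaluate the two outer terms as $1$ and $-1$. You observe that the functional equation alone, applied at the central index $i=N_{s,n}/2$, already forces $t_{N_{s,n}/2,s,n}=-t_{N_{s,n}/2,s,n}=0$; since the paper needs that same functional equation anyway, your proof strictly shortens the paper's by making Theorem \ref{main000} superfluous here. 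Your parity bookkeeping is also right: $n\equiv 3\pmod 4$ with $s$ odd is exactly what makes the middle index integral while keeping $(-1)^{sn}=-1$. Your secondary route (Theorem \ref{main000} with $N=N_{s,n}+1$, $j=N_{s,n}/2$, combined with the identification $M_{s,n,N_{s,n}+1}(j)=t_{j,s,n}$ underlying Theorem \ref{main0}) is also valid and is closer in spirit to the paper's sieve-based derivation, though still not the same specialization the paper uses. What the paper's choice buys is a uniform template --- the same divisor trick $N'\mid N_{s,n}$ also drives Corollary \ref{div1} --- whereas your direct symmetry argument buys brevity and independence from the character-sum machinery.
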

In view of \eqref{tau}, let us denote by $\tau_{n}(j)$ the $jth$ coefficient of the following truncated product:
\begin{eqnarray*}
\prod_{k=1}^n(1-q^k)^{24}:=\sum_{j=0}^{12n(n+1)}\tau_{n}(j)\;q^j.
\end{eqnarray*}
Then
\begin{cor}
Let $j\in\mathbb{Z}_{n+1}$. We have
\begin{eqnarray*}
\sum_{\ell=0}^{(12n(n+1)-j)/(n+1)}\tau_{n}(\ell(n+1)+j)=\left\{\begin{array}{cc}
(n+1)^{23}\varphi(n+1),&\emph{if}\;j=0,\\
\mbox{}\\
-(n+1)^{23},&\emph{otherwise.}
\end{array}\right.
\end{eqnarray*}
\end{cor}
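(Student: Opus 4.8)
The plan is to recognize this corollary as nothing more than the $s=24$ instance of Theorem \ref{main1}, once the notation is matched up. First I would observe that the truncated product $\prod_{k=1}^n(1-q^k)^{24}$ is precisely $T_{24,n}(q)$, so that by the defining expansion \eqref{Tpq} we have $\tau_n(j)=t_{j,24,n}$ for every $j$. The degree bookkeeping is immediate: $N_{24,n}=n(n+1)\cdot 24/2 = 12n(n+1)$, which is exactly the top index appearing in the truncated $\tau$-series, and in particular $(n+1)^{s-1}=(n+1)^{23}$ for $s=24$.

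With $s=24$ and modulus $n+1$ fixed, Theorem \ref{main1} asserts that for each $j\in\mathbb{Z}_{n+1}$ the sum $\sum_{0\le i\le N_{24,n},\, i\in S_{n+1,j}} t_{i,24,n}$ equals $(n+1)^{23}\varphi(n+1)$ when $j=0$ and $-(n+1)^{23}$ otherwise. All that remains is to rewrite the abstract index set as the explicit range of $\ell$ appearing in the statement. To do this I would parametrize $S_{n+1,j}=\{\ell(n+1)+j:\ell\in\mathbb{Z}\}$ and impose $0\le \ell(n+1)+j\le 12n(n+1)$. Since $0\le j<n+1$, the lower constraint forces $\ell\ge 0$, while the upper constraint gives $\ell\le (12n(n+1)-j)/(n+1)$; substituting $t_{\ell(n+1)+j,24,n}=\tau_n(\ell(n+1)+j)$ then turns the left-hand side of Theorem \ref{main1} into the displayed sum, completing the argument.

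Because the result is a direct specialization, there is no genuine obstacle. The only point demanding a moment's care is the upper summation limit: one checks that $(12n(n+1)-j)/(n+1)$ equals $12n$ when $j=0$ and lies in the interval $(12n-1,12n)$ when $0<j<n+1$, so that taking the integer part recovers exactly the range written in the corollary and no term of the product is omitted or double-counted.
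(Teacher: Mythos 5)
Your proposal is correct and is exactly the argument the paper intends: the corollary is stated without proof as the immediate specialization $s=24$ of Theorem \ref{main1}, using $\tau_n(j)=t_{j,24,n}$, $N_{24,n}=12n(n+1)$, and the reindexing of $S_{n+1,j}$ that you carry out. Your remark about interpreting the non-integer upper limit as its integer part when $j\neq 0$ is a sensible clarification the paper leaves implicit.
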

Let $\mathcal{M}_{n,s}=\max_{j} |t_{j,s,n}|$. Then using a result due to Sudler \cite{Sud} we prove the following:
\begin{theorem}\label{maxpeak}
For a sufficiently large $n$ we have
\begin{eqnarray*}
\mathcal{M}_{n,s}=e^{sKn+O(\log n)},
\end{eqnarray*}
where $K=\log 2 + \max\left(\displaystyle w^{-1}\int_{0}^w\log \sin \pi t\;dt\right)\approx 0.19861$, with $\frac{1}{2}<w<1$.
\end{theorem}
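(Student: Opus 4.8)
The plan is to bootstrap from Sudler's $s=1$ estimate \eqref{Sud4} using the elementary observation that $T_{s,n}(q)=\bigl(T_{1,n}(q)\bigr)^{s}$, so that $\mathcal{M}_{n,s}$ is the height (largest coefficient in absolute value) of the $s$-th power of the single polynomial $T_{1,n}(q)$. The main device is to compare two norms attached to a polynomial $f$: its height $H(f):=\max_j|[q^j]f(q)|$ and its supremum norm on the unit circle $\|f\|:=\max_{|q|=1}|f(q)|$. For any $f$ of degree $N$ one has the two-sided bound
\begin{equation*}
\frac{1}{N+1}\,\|f\|\ \le\ H(f)\ \le\ \|f\|,
\end{equation*}
the right inequality from the Cauchy integral $[q^j]f=\frac{1}{2\pi i}\oint f(q)q^{-j-1}\,dq$ taken over $|q|=1$, and the left from $|f(q)|\le\sum_i|[q^i]f|\le (N+1)H(f)$ on $|q|=1$.

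The key point is that the supremum norm is exactly multiplicative under powers: since $|f(q)^{s}|=|f(q)|^{s}$, we have $\|f^{s}\|=\|f\|^{s}$. Applying the two-sided bound to $f=T_{1,n}$ (of degree $N_{1,n}=n(n+1)/2$) and to $f^{s}=T_{s,n}$ (of degree $N_{s,n}=sn(n+1)/2$), and inserting $\|T_{s,n}\|=\|T_{1,n}\|^{s}$, I obtain
\begin{equation*}
\frac{H(T_{1,n})^{s}}{N_{s,n}+1}\ \le\ H(T_{s,n})\ \le\ (N_{1,n}+1)^{s}\,H(T_{1,n})^{s}.
\end{equation*}
Taking logarithms and writing $\mathcal{M}_n=H(T_{1,n})$ and $\mathcal{M}_{n,s}=H(T_{s,n})$ gives $\log \mathcal{M}_{n,s}=s\log \mathcal{M}_n+O(\log n)$, since both $\log(N_{s,n}+1)$ and $s\log(N_{1,n}+1)$ are $O(\log n)$ for fixed $s$, the degrees being of size $O(n^{2})$.

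Finally, substituting Sudler's estimate $\log \mathcal{M}_n=Kn+O(\log n)$ from \eqref{Sud4}, with the same constant $K=\log 2+\max\bigl(w^{-1}\int_0^{w}\log\sin\pi t\,dt\bigr)$, yields $\log \mathcal{M}_{n,s}=sKn+O(\log n)$, which is precisely the claim $\mathcal{M}_{n,s}=e^{sKn+O(\log n)}$. I expect the only point requiring genuine care to be the bookkeeping of the error term: one must verify that the polynomial-in-$n$ losses incurred when passing between the coefficient norm and the circle norm contribute only $O(\log n)$ after taking logarithms, and that the dependence of the implied constant on the fixed parameter $s$ is harmless. Beyond this there is no real obstacle, because the multiplicativity $\|f^{s}\|=\|f\|^{s}$ transfers Sudler's single-power asymptotic to every $s$ with no loss in the linear term.
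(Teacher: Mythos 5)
Your proof is correct and follows essentially the same route as the paper: both arguments compare the maximal coefficient with the supremum norm on $|q|=1$ via the Cauchy integral and the triangle inequality (losing only a factor polynomial in $n$, hence $O(\log n)$ after taking logarithms), and both exploit the exact multiplicativity $\max_{|q|=1}|T_{s,n}(q)|=\bigl(\max_{|q|=1}|T_{1,n}(q)|\bigr)^{s}$ before invoking Sudler. The only cosmetic difference is that you feed in Sudler's coefficient estimate \eqref{Sud4} (applying the norm comparison once more at $s=1$), whereas the paper cites Sudler's sup-norm asymptotic directly; the two are equivalent up to $O(\log n)$ by the very comparison you use.
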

\section{Li-Wan Sieve}\label{LWs}
The quantity $M_{s,n,N}(k_1,k_2,\ldots,k_s,j)$ is the number of certain type of subsets of $D_n^s$. We next apply some elementary character theory to estimate it. 

We note that 
\begin{eqnarray*}
\rho:=\sum\limits_{\psi \in \hat{G}}\psi
\end{eqnarray*}
is the regular character of $G$. It is well-known that $\rho(g)=0$ for all $g\in G\setminus \{0\}$, and that $\rho(0)=|G|=N$.
Given $0< r \leq |D_{n}|$, a character $\psi \in \hat{G}$, and $\bar{x}=(x_{1},\ldots ,\ x_{r})$, we set
\begin{eqnarray*}
\prod_{i=1}^r\psi(x_{i}):=f_{\psi}(\bar{x}),\ \text{and}\hspace{1cm}
\mathcal{S}(\bar{x}):=\sum_{i=1}^r x_i.
\end{eqnarray*}
Let $Y_{n,s}^{k_1,k_2,\ldots,k_s}$ denote the cartesian product $\prod_{i=1}^s \overline{X}_{n,k_i}$. Then we have
\begin{eqnarray}
k_1!\ldots k_s!M_{s,n,N}(k_1,k_2,\ldots,k_s,j)&=&\dfrac{1}{N}\sum\limits_{(\bar{x}_1,\bar{x}_2,\ldots,\bar{x}_s)\in Y_{n,s}^{k_1,k_2,\ldots,k_s}} \sum \limits_{\psi \in \hat{G}} \psi(\mathcal{S}(\bar{x}_1)+\mathcal{S}(\bar{x}_2)+\ldots+\mathcal{S}(\bar{x}_s)-j)\notag\\
&=&\dfrac{1}{N}\left(\prod_{i=1}^s\left(n \right)_{k_{i}}+\sum\limits_{(\bar{x}_1,\bar{x}_2,\ldots,\bar{x}_s)\in Y_{p,s}^{k_1,k_2,\ldots,k_s}} \sum \limits_{\psi_0\neq \psi \in \hat{G}}\psi^{-1}(j)\prod_{i=1}^s\psi(\mathcal{S}(\bar{x}_i))\right).\notag
\end{eqnarray}
In the right-hand side above we interchange the sums to get,
\begin{eqnarray}
k_1!\ldots k_s!M_{s,n,N}(k_1,k_2,\ldots,k_s,j)&=&\dfrac{1}{N}\left(\prod_{i=1}^s\left(n \right)_{k_{i}}\notag+\;\sum \limits_{\psi_0\neq \psi \in \hat{G}}\psi^{-1}(j)\sum\limits_{(\bar{x}_1,\bar{x}_2,\ldots,\bar{x}_s)\in Y_{n,s}^{k_1,k_2,\ldots,k_s}} \prod_{i=1}^sf_\psi(\bar{x}_i)\right)\notag\\
&=&\dfrac{1}{N}\left(\prod_{i=1}^s\left(n \right)_{k_{i}}+\sum \limits_{\psi_0\neq \psi \in \hat{G}}\psi^{-1}(j)\prod_{i=1}^s\left(\sum\limits_{\bar{x}_i\in \overline{X}_{n,k_i}} f_\psi(\bar{x}_i)\right)\right).
\end{eqnarray}
For a $Y \subset X_{n,k}$ and a character $\psi \in \hat{G}$, set $F_{\psi}(Y):=\sum\limits_{\bar{y} \in Y} f_{\psi}(\bar{y})$. We now have 
\begin{eqnarray}\label{rc}
k_1!k_2!\ldots k_s!M_{s,n}(k_1,k_2,\ldots,k_s,b)&=&\dfrac{1}{N}\prod_{i=1}^s\left(n \right)_{k_{i}}+\dfrac{1}{N}\sum \limits_{\psi_0\neq \psi \in \hat{G}}\psi^{-1}(j)\prod_{i=1}^sF_{\psi}(\overline{X}_{n,k_i})
\end{eqnarray}
We now estimate sums of the form $F_{\psi}(\overline{X}_{n,k})$.
The symmetric group $S_{k}$ acts naturally on $X_{n,k}=D_n^{k}$. Let $\tau \in S_{k}$ be a permutation whose cycle decomposition is
\begin{eqnarray*}
\tau=(i_1i_2\ldots i_{a_1})(j_1j_2\ldots j_{a_2})\ldots (\ell_1\ell_2\ldots \ell_{a_s})
\end{eqnarray*}
where $a_i\geq 1, 1\leq i\leq s$. We define 
\begin{eqnarray*}
X_{n,k}^{\tau}:=\left\{(x_1,x_2,\ldots,x_k)\in X_{n,k}: x_{i_1}=\ldots=x_{i_{a_1}}, \ldots,x_{\ell_1}=\ldots=x_{\ell_{a_s}} \right\}.
\end{eqnarray*}
In other words, $X_{n,k}^{\tau}$ is the set of elements in $X_{n,k}$ fixed under the action of $\tau$. Let $C_{k}$ be a set of conjugacy class representatives of $S_{k}$.  
Let us denote by $C(\tau)$ the number of elements conjugate to $\tau$. 
Now for any $\tau \in S_{k}$, we have  $\tau(X_{n,k})=X_{n,k}$. We note that for any pair $\tau$, $\tau'$ of conjugate permutations, and for any $\psi \in \hat{G}$, we have $F_{\psi}(\overline{X}^\tau_{n,k})=F_{\psi}(\overline{X}^{\tau'}_{n,k_i})$. That is, according to the definitions in \cite{Li-Wan}, $X_{n,k}$ is symmetric and $f_\psi$ is normal on $X$. Thus we have the following result which is essentially \cite[Proposition 2.8]{Li-Wan}.
\begin{prop}\label{LWS}
We have
\begin{equation*}
F_{\psi}(\overline{X}_{n,k})=\sum\limits_{\tau \in C_{k}} \sign(\tau)C(\tau)F_{\psi}(\overline{X}^\tau_{n,k}).
\end{equation*}
\end{prop}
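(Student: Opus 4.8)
The plan is to deduce the identity from the general Li--Wan sieving principle by verifying its two structural hypotheses for our data and reproving the underlying inclusion--exclusion. The cleanest route is to start not from a sum over the representatives $C_k$ but from the full alternating sum over the whole symmetric group,
\begin{equation*}
\Sigma := \sum_{\tau \in S_k} \sign(\tau)\, F_\psi(X^\tau_{n,k}),
\end{equation*}
where $X^\tau_{n,k}$ is the fixed-point set introduced above (the object written $\overline{X}^\tau_{n,k}$ in the statement), and then to establish in two independent steps that $\Sigma = F_\psi(\overline{X}_{n,k})$ and that $\Sigma = \sum_{\tau \in C_k}\sign(\tau)\, C(\tau)\, F_\psi(X^\tau_{n,k})$. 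Comparing the two evaluations then gives the proposition.

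For the first equality I would interchange the order of summation, writing
\begin{equation*}
\Sigma = \sum_{\bar{x} \in X_{n,k}} f_\psi(\bar{x}) \sum_{\substack{\tau \in S_k \\ \bar{x} \in X^\tau_{n,k}}} \sign(\tau),
\end{equation*}
which is legitimate since the identity permutation fixes everything, so the tuples range over all of $X_{n,k}$. The key observation is that $\bar{x} \in X^\tau_{n,k}$ precisely when $\tau$ is constant on the coordinates of $\bar{x}$ along each of its cycles; equivalently, letting $P(\bar{x})$ be the set partition of $\{1,\ldots,k\}$ whose blocks are the maximal sets of indices carrying equal entries, the admissible $\tau$ are exactly the elements of the Young subgroup $\prod_i S_{b_i}$, where $b_1,\ldots,b_m$ are the block sizes of $P(\bar{x})$. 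Since the sign is multiplicative across this direct product, the inner sum factors as $\prod_{i=1}^m \big(\sum_{\sigma \in S_{b_i}} \sign(\sigma)\big)$, and each factor vanishes as soon as $b_i \geq 2$. Hence only the tuples whose blocks are all singletons survive, namely those with pairwise distinct coordinates, and each contributes $f_\psi(\bar{x})$; this gives $\Sigma = F_\psi(\overline{X}_{n,k})$.

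For the second equality I would invoke normality. Because $f_\psi(\bar{x}) = \prod_i \psi(x_i)$ is a product of the same character over the coordinates, it is invariant under any permutation of those coordinates; consequently, if $\tau' = \sigma\tau\sigma^{-1}$ then $X^{\tau'}_{n,k} = \sigma\cdot X^\tau_{n,k}$ and $F_\psi(X^{\tau'}_{n,k}) = F_\psi(X^\tau_{n,k})$, so $\tau \mapsto \sign(\tau)\, F_\psi(X^\tau_{n,k})$ is a class function on $S_k$. Collapsing the sum over $S_k$ onto the conjugacy-class representatives $C_k$, with each class contributing $C(\tau)$ equal terms, yields the claimed form of $\Sigma$.

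The preliminary bookkeeping — that the $S_k$-action preserves $X_{n,k}$ (so $X_{n,k}$ is \emph{symmetric}) and that $f_\psi$ is \emph{normal} in the sense of Li--Wan — is immediate from commutativity of the product defining $f_\psi$, so the only genuine content is the sign-sum computation. I expect the main obstacle to be purely one of care: correctly matching the cycle structure of $\tau$ to the partition $P(\bar{x})$ and stating precisely that $\sum_{\sigma \in S_b}\sign(\sigma) = 0$ for $b \geq 2$ is exactly what forces distinctness of the coordinates. Once these hypotheses are in place, the result is a direct instance of \cite[Proposition 2.8]{Li-Wan}.
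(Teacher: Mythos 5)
Your argument is correct. Note, though, that the paper does not actually prove this proposition: it only observes that $X_{n,k}$ is symmetric and that $f_{\psi}$ is normal in the sense of Li and Wan, and then invokes \cite[Proposition 2.8]{Li-Wan}. You verify the same two hypotheses but in addition supply the proof of the sieve identity itself: the interchange of summation in $\sum_{\tau\in S_k}\sign(\tau)F_{\psi}(X^{\tau}_{n,k})$, the identification of the permutations fixing a given tuple $\bar{x}$ with the Young subgroup attached to the level-set partition $P(\bar{x})$, the vanishing of $\sum_{\sigma\in S_b}\sign(\sigma)$ for $b\geq 2$ (which is precisely what forces distinct coordinates), and the collapse onto conjugacy-class representatives via the class-function property. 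This is exactly the content of Li--Wan's own proof, so the two routes coincide mathematically; yours simply makes the step self-contained rather than outsourced to the citation. One detail you handled correctly and that is worth flagging: the overline in $F_{\psi}(\overline{X}^{\tau}_{n,k})$ on the right-hand side of the statement must be read as the fixed-point set $X^{\tau}_{n,k}$ used in Lemma \ref{Ftau}, since the set of distinct-coordinate tuples fixed by a non-identity $\tau$ is empty and the identity as literally printed would be false.
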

\subsection{Some useful lemmas}
The following lemma exhibits the relationship between $F_{\psi}(\overline{X}^\tau_{n,k})$ and the cycle structure of $\tau$.  
\begin{lem}\label{Ftau}
Let $\tau \in C_{k}$ be the representative whose cyclic structure is associated with the partition $(1^{c_{1}},2^{c_{2}},\ldots k^{c_{k}})$ of $k$. Then we have $F_{\psi}(X^{\tau}_{n,k})=\prod_{i=1}^{k}(\sum\limits_{a \in D_n}\psi^{i}(a))^{c_{i}}$.
\end{lem}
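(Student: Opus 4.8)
The plan is to parametrize the fixed-point set $X_{n,k}^{\tau}$ explicitly and then evaluate $f_{\psi}$ on it termwise. First I would record the observation underlying the definition of $X_{n,k}^{\tau}$: since $\tau$ acts on $X_{n,k}=D_n^{k}$ by permuting coordinates, a tuple $\bar{x}=(x_1,\dots,x_k)$ is fixed by $\tau$ if and only if $x_u=x_v$ whenever $u$ and $v$ lie in a common cycle of $\tau$. Equivalently, $\bar{x}\in X_{n,k}^{\tau}$ precisely when $\bar{x}$ is constant on each cycle of $\tau$, so such a tuple is determined by a single \emph{independent} choice of value $a_{\gamma}\in D_n$ for every cycle $\gamma$ of $\tau$.

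Next I would compute $f_{\psi}(\bar{x})$ for a fixed tuple in terms of these cycle-values. If a cycle $\gamma$ has length $i$, then the value $a_{\gamma}$ occupies exactly $i$ of the coordinates of $\bar{x}$, and each such coordinate contributes a factor $\psi(a_{\gamma})$ to the product $f_{\psi}(\bar{x})=\prod_{t=1}^{k}\psi(x_t)$. Using that $\psi$ is a linear character, so that $\psi(a)^{i}=\psi^{i}(a)$, the total contribution of the cycle $\gamma$ is $\psi^{|\gamma|}(a_{\gamma})$. Hence $f_{\psi}(\bar{x})=\prod_{\gamma}\psi^{|\gamma|}(a_{\gamma})$, the product running over all cycles $\gamma$ of $\tau$.

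Finally I would sum over all fixed tuples. Since the choices $a_{\gamma}$ range independently over $D_n$, summing $\prod_{\gamma}\psi^{|\gamma|}(a_{\gamma})$ over $X_{n,k}^{\tau}$ factorizes as a product over cycles:
\[
F_{\psi}(X_{n,k}^{\tau})=\sum_{\bar{x}\in X_{n,k}^{\tau}}f_{\psi}(\bar{x})=\prod_{\gamma}\Big(\sum_{a\in D_n}\psi^{|\gamma|}(a)\Big).
\]
Grouping the cycles by their common length $i$ --- there being $c_i$ cycles of length $i$ in the cycle type $(1^{c_1},2^{c_2},\dots,k^{c_k})$ --- collapses this into $\prod_{i=1}^{k}\big(\sum_{a\in D_n}\psi^{i}(a)\big)^{c_i}$, which is the claimed identity.

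The only genuine subtlety, and hence the main obstacle, is the bookkeeping in the last two steps: correctly matching each cycle's length to the exponent on $\psi$, and confirming that the sum over $X_{n,k}^{\tau}$ truly factorizes because the per-cycle values are \emph{unconstrained} (this is exactly why the lemma is stated for $X_{n,k}^{\tau}$ rather than the distinct-coordinate set $\overline{X}_{n,k}^{\tau}$, where the values would interact). Once the fixed-point set is described as a free assignment of values to cycles, the remainder is a routine factorization of a sum of products, with the character identity $\psi^{i}(a)=\psi(a)^{i}$ being the single place where the group structure enters.
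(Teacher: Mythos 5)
Your proof is correct and follows essentially the same route as the paper's: both describe the fixed-point set as tuples constant on the cycles of $\tau$, observe that a cycle of length $i$ contributes $\psi^{i}$ of its common value, and then factor the sum over the independent per-cycle choices. Your write-up simply makes explicit the bookkeeping that the paper's one-line computation leaves implicit.
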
 
\begin{proof}
Recall that
\begin{align*}
F_{\psi}(X^{\tau}_{n,k})&=\sum\limits_{\bar{x} \in X_{n,k}^{\tau}} \prod_{i=1}^{k}\psi(x_{i})\\
&=\sum\limits_{\bar{x}\in X_{n,k}^{\tau}} \prod_{i=1}^{c_{1}}\psi(x_{i})\prod_{i=1}^{c_{2}}\psi^{2}(x_{c_{1}+2i})\ldots \prod_{i=1}^{c_{k}}\psi^{k}(x_{c_{1}+c_{2}\ldots+ki})\\
&= \prod_{i=1}^{k}(\sum\limits_{a \in D_n}\psi^{i}(a))^{c_{i}}.
\end{align*}
\end{proof}
Given $\chi\in \hat{G}$ define
\begin{eqnarray}\label{schi}
s_{D_n}(\chi):=\sum\limits_{a\in D_n} \chi(a).
\end{eqnarray}
Let $N(c_{1},c_{2},\ldots c_{k})$ denote the number of elements of $S_{k}$ of cycle type $(c_{1},c_{2},\ldots c_{k})$. It is well-known (see, for example, \cite{Stan}) that
\begin{eqnarray}\label{Ncom}
N(c_{1},c_{2},\ldots c_{k})=\dfrac{k!}{1^{c_1}c_1!2^{c_2}c_2!\ldots k^{c_k}c_k!}.
\end{eqnarray}
Then
\begin{lem}\label{Hpsi}
We have
\begin{eqnarray*}
F_{\psi}(\overline{X}_{n,k})=(-1)^k\sum_{\sum_{i}ic_i=k}N(c_1,c_2,\ldots, c_k)\prod_{i=1}^{k}(-s_{D_n}(\psi^i))^{c_{i}}.
\end{eqnarray*}
\end{lem}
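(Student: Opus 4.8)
The plan is to combine the three preceding results—Proposition \ref{LWS}, Lemma \ref{Ftau}, and the counting identity \eqref{Ncom}—and then reconcile the signs. First I would start from the sieve decomposition of Proposition \ref{LWS},
\[
F_{\psi}(\overline{X}_{n,k})=\sum_{\tau \in C_{k}} \sign(\tau)\,C(\tau)\,F_{\psi}(X^{\tau}_{n,k}),
\]
and reparametrize the sum over conjugacy-class representatives $\tau\in C_k$ by cycle type. Every conjugacy class of $S_k$ corresponds to a unique partition $(1^{c_1},2^{c_2},\ldots,k^{c_k})$ of $k$, that is, to a tuple $(c_1,\ldots,c_k)$ with $\sum_i i c_i=k$. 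Under this correspondence all three quantities attached to $\tau$ become explicit: Lemma \ref{Ftau} gives $F_{\psi}(X^{\tau}_{n,k})=\prod_{i=1}^k s_{D_n}(\psi^i)^{c_i}$ after recalling the notation \eqref{schi}; the counting formula \eqref{Ncom} gives the class size $C(\tau)=N(c_1,\ldots,c_k)$; and the sign is $\sign(\tau)=(-1)^{\sum_i (i-1)c_i}$, since a cycle of length $i$ contributes sign $(-1)^{i-1}$.

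Next I would simplify the sign exponent using the constraint $\sum_i i c_i=k$, namely
\[
\sum_i (i-1)c_i=\sum_i i c_i-\sum_i c_i=k-\sum_i c_i,
\]
so that $\sign(\tau)=(-1)^{k-\sum_i c_i}=(-1)^{k+\sum_i c_i}$. Substituting the three explicit quantities into the sieve decomposition then yields
\[
F_{\psi}(\overline{X}_{n,k})=\sum_{\sum_i ic_i=k}(-1)^{k+\sum_i c_i}\,N(c_1,\ldots,c_k)\prod_{i=1}^k s_{D_n}(\psi^i)^{c_i}.
\]

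Finally I would absorb the sign factor into the product to obtain the stated form. Factoring out an overall $(-1)^k$ and distributing the remaining $(-1)^{\sum_i c_i}=\prod_i(-1)^{c_i}$ across the factors converts $\prod_i s_{D_n}(\psi^i)^{c_i}$ into $\prod_i(-s_{D_n}(\psi^i))^{c_i}$, which is exactly the claimed expression. I do not expect a genuine obstacle: the argument is essentially sign bookkeeping, and the one structural point—that $F_\psi$ is constant on conjugacy classes, so that summing over representatives weighted by $C(\tau)$ is legitimate—is precisely the normality of $f_\psi$ already recorded in the discussion preceding Proposition \ref{LWS}.
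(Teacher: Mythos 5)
Your proposal is correct and follows essentially the same route as the paper's proof: reparametrize the sum in Proposition \ref{LWS} by cycle type, substitute $C(\tau)=N(c_1,\ldots,c_k)$ and the evaluation from Lemma \ref{Ftau}, and use $\sign(\tau)=(-1)^{k-\sum_i c_i}$ to absorb the signs into the product. Your version is in fact slightly more explicit about the sign computation than the paper's.
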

\begin{proof}
To prove this lemma, we first note that $\sign(\tau)=(-1)^{k-\sum_{i}c_i}$. Also the cyclic structure for every $\tau\in C_k$ can be associated to a partition of $k$ of the form $(1^{c_1}, 2^{c_2},\ldots,k^{c_k})$. Hence the right-hand sum in Proposition \ref{LWS} runs over all such partitions of $k$. Noting that the conjugate permutations have same cycle type, and there are exactly $N(c_1,c_2,\ldots,c_k)$ permutations with cycle type $(c_{1},c_{2},\ldots c_{k})$ we conclude, in view of Lemma \ref{Ftau} that
\begin{eqnarray*}
F_{\psi}(\overline{X}_{n,k})=(-1)^k\sum_{\sum_{i}ic_i=k}N(c_1,c_2,\ldots, c_k)\prod_{i=1}^{k}(-\sum\limits_{a \in D_n}\psi^{i}(a))^{c_{i}}.
\end{eqnarray*}
\end{proof}
Next, we define the following polynomial in $k$ variables:
\begin{eqnarray}\label{Zgen}
Z_{k}(t_{1},\ldots, t_{k}):= \sum\limits_{\sum ic_{i}=k}N(c_{1},\ldots,c_{k})t_{1}^{c_{1}}\ldots t_{k}^{c_{k}}.
\end{eqnarray}
From Lemma \ref{Hpsi} and \eqref{Zgen} we immediately see that
\begin{cor}\label{HZ}
We have
\begin{eqnarray*}
F_{\psi}(\overline{X}_{n,k})=(-1)^kZ_k(-s_{D_n}(\psi),-s_{D_n}(\psi^2),\ldots,-s_{D_n}(\psi^k))
\end{eqnarray*}
where for $\chi\in\hat{G}$, $s_{D_n}(\chi)$ is as in \emph{(\ref{schi})}.
\end{cor}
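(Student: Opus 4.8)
The plan is to recognize that Corollary \ref{HZ} is obtained from Lemma \ref{Hpsi} by a single formal substitution into the multivariate polynomial $Z_k$ defined in \eqref{Zgen}. Concretely, I would introduce the $k$ arguments $t_i := -s_{D_n}(\psi^i)$ for $1 \le i \le k$, where $s_{D_n}$ is as in \eqref{schi}.

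First I would write out $Z_k$ evaluated at these arguments. By the definition \eqref{Zgen},
$$Z_k\bigl(-s_{D_n}(\psi), -s_{D_n}(\psi^2), \ldots, -s_{D_n}(\psi^k)\bigr) = \sum_{\sum_i i c_i = k} N(c_1, \ldots, c_k) \prod_{i=1}^k \bigl(-s_{D_n}(\psi^i)\bigr)^{c_i},$$
the sum ranging over all tuples $(c_1, \ldots, c_k)$ of nonnegative integers with $\sum_i i c_i = k$, i.e.\ over all partitions of $k$ written in frequency notation.

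Next I would compare this with Lemma \ref{Hpsi}, which asserts
$$F_\psi(\overline{X}_{n,k}) = (-1)^k \sum_{\sum_i i c_i = k} N(c_1, \ldots, c_k) \prod_{i=1}^k \bigl(-s_{D_n}(\psi^i)\bigr)^{c_i}.$$
The two sums are term-by-term identical, so multiplying the displayed evaluation of $Z_k$ by $(-1)^k$ reproduces exactly $F_\psi(\overline{X}_{n,k})$, which is the claimed identity.

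Since this is a purely formal matching of two expressions, there is no genuine obstacle; the only point requiring care is the bookkeeping, namely confirming that the index set (partitions of $k$ in frequency notation, constrained by $\sum_i i c_i = k$) and the weights $N(c_1, \ldots, c_k)$ appearing in \eqref{Zgen} coincide with those in Lemma \ref{Hpsi}. Both are the number of permutations in $S_k$ of the prescribed cycle type, given explicitly by \eqref{Ncom}, so the agreement is immediate and the corollary follows.
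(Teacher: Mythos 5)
Your proposal is correct and matches the paper exactly: the paper derives Corollary \ref{HZ} as an immediate consequence of Lemma \ref{Hpsi} together with the definition \eqref{Zgen} of $Z_k$, which is precisely the formal substitution $t_i = -s_{D_n}(\psi^i)$ you carry out. The bookkeeping check that the index sets and weights $N(c_1,\ldots,c_k)$ agree is the whole content, and you have verified it.
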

\subsection{Some combinatorial functions and estimates}
We now evaluate $Z_{k}(\delta^{\psi}(1),\ldots, \delta^{\psi}(k))$.
From (\ref{Ncom}) and (\ref{Zgen}) we immediately deduce the following:
\begin{lem}[Exponential generating function]\label{Egf}
We have
\begin{eqnarray*}
\sum\limits_{k \geq 0}Z_{k}(t_{1},t_{2}, \ldots t_{k})\dfrac{u^{k}}{k!}=e^{ut_{1}+u^{2}\frac{t_{2}}{2}+\ldots }
\end{eqnarray*}
\end{lem}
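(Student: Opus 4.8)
The final statement to prove is Lemma \ref{Egf}, the exponential generating function identity
\[
\sum_{k\geq 0} Z_k(t_1,t_2,\ldots,t_k)\frac{u^k}{k!} = \exp\!\left(ut_1 + u^2\frac{t_2}{2} + u^3\frac{t_3}{3}+\cdots\right).
\]

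The plan is to work directly from the definition \eqref{Zgen} of $Z_k$ together with the formula \eqref{Ncom} for $N(c_1,\ldots,c_k)$, and to recognize the right-hand side as a product of elementary exponential series. First I would substitute \eqref{Ncom} into \eqref{Zgen} to write
\[
Z_k(t_1,\ldots,t_k) = \sum_{\sum_i i c_i = k} \frac{k!}{\prod_{i=1}^k i^{c_i} c_i!}\, t_1^{c_1}\cdots t_k^{c_k}.
\]
Dividing by $k!$ and summing over $k\geq 0$ absorbs the factorial and converts the constraint $\sum_i i c_i = k$ into a free sum: the coefficient of $u^k$ forces $\sum_i i c_i = k$ automatically once I attach the weight $u^{\sum_i i c_i}$. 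Concretely, I would write $u^k = \prod_i u^{i c_i}$ under the constraint and obtain
\[
\sum_{k\geq 0} Z_k \frac{u^k}{k!} = \sum_{c_1,c_2,\ldots \geq 0}\ \prod_{i\geq 1} \frac{1}{c_i!}\left(\frac{t_i u^i}{i}\right)^{c_i},
\]
where the outer sum now ranges over all finitely supported sequences $(c_1,c_2,\ldots)$ of nonnegative integers, with no constraint remaining.

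The second step is the routine factorization: since the summand is a product over $i$ of terms depending only on $c_i$, the multiple sum factors as an infinite product of single-variable exponential sums,
\[
\sum_{c_1,c_2,\ldots\geq 0}\ \prod_{i\geq 1} \frac{1}{c_i!}\left(\frac{t_i u^i}{i}\right)^{c_i} = \prod_{i\geq 1}\left(\sum_{c_i\geq 0}\frac{1}{c_i!}\left(\frac{t_i u^i}{i}\right)^{c_i}\right) = \prod_{i\geq 1}\exp\!\left(\frac{t_i u^i}{i}\right).
\]
Combining the product of exponentials into a single exponential of a sum then yields $\exp\!\left(\sum_{i\geq 1} t_i u^i / i\right) = \exp\!\left(ut_1 + u^2 t_2/2 + \cdots\right)$, which is exactly the claimed right-hand side.

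The only real subtlety, and the step I would be most careful about, is the interchange of summation and the passage from a constrained sum to an unconstrained product. Everything here is a formal power series identity in the indeterminates $u$ and $(t_i)$: for each fixed $k$ only finitely many tuples $(c_1,\ldots,c_k)$ with $\sum_i i c_i = k$ contribute, so extracting the coefficient of $u^k$ from both sides involves only finite sums and no convergence issue arises. I would therefore phrase the argument at the level of formal power series (or note that the factorization of the product is the standard exponential formula / cycle-index identity for the symmetric group, since $N(c_1,\ldots,c_k)$ is precisely the number of permutations of cycle type $(c_1,\ldots,c_k)$). This makes the manipulation rigorous without any analytic estimate, and the identity follows immediately by comparing coefficients of $u^k$ on both sides.
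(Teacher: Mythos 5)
Your proposal is correct and is exactly the standard computation the paper has in mind: the paper offers no written proof, merely asserting that the lemma follows immediately from \eqref{Ncom} and \eqref{Zgen}, and your substitution of the cycle-type count followed by the factorization of the unconstrained sum into $\prod_{i\geq 1}\exp(t_iu^i/i)$ supplies precisely those omitted details. Your remark that everything is a formal power series identity, so no convergence issue arises, is an appropriate and sufficient justification for the interchange.
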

\begin{lem}\label{speZ1}
Let $D\subseteq \mathbb{N}$. Then
\begin{eqnarray*}
Z_k\left(-s_{D}(\psi), -s_{D}(\psi^2),\ldots,-s_D(\psi^k)\right)=\left[\dfrac{u^k}{k!}\right]\prod_{a\in D}(u-\psi(-a)).
\end{eqnarray*}
\end{lem}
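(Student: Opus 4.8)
The plan is to read off both sides as coefficients of a single generating function, using the exponential generating function for the $Z_k$ recorded in Lemma \ref{Egf}. Specializing the variables there to $t_i=-s_{D}(\psi^i)$, I would write
\[
\sum_{k\geq 0} Z_k\bigl(-s_D(\psi),\ldots,-s_D(\psi^k)\bigr)\frac{u^k}{k!} = \exp\Bigl(\sum_{i\geq 1}\frac{-s_D(\psi^i)}{i}\,u^i\Bigr),
\]
so that the left-hand side of the lemma is exactly the coefficient $[u^k/k!]$ of the right-hand exponential. Everything then rests on evaluating that exponential in closed form.

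The key step is to factor the exponent over the elements of $D$. Since $\psi$ is a character of the cyclic group $G$, one has $\psi^i(a)=\psi(a)^i$, hence $s_D(\psi^i)=\sum_{a\in D}\psi(a)^i$. Interchanging the (formal) summations over $i$ and over $a\in D$ gives
\[
\sum_{i\geq 1}\frac{-s_D(\psi^i)}{i}\,u^i = \sum_{a\in D}\Bigl(-\sum_{i\geq 1}\frac{(\psi(a)u)^i}{i}\Bigr) = \sum_{a\in D}\log\bigl(1-\psi(a)u\bigr),
\]
using the formal identity $-\log(1-z)=\sum_{i\geq1}z^i/i$. Exponentiating converts the sum into a product, so the generating function collapses to the polynomial $\prod_{a\in D}(1-\psi(a)u)$, and extracting $[u^k/k!]$ already yields a closed form for $Z_k\bigl(-s_D(\psi),\ldots,-s_D(\psi^k)\bigr)$.

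To put the product into the form $\prod_{a\in D}(u-\psi(-a))$ appearing in the statement, I would use the per-factor reciprocal identity $1-\psi(a)u=-\psi(a)\,(u-\psi(-a))$, which holds because $\psi(a)\psi(-a)=\psi(0)=1$ in $\hat G$; this exhibits each root as $\psi(a)^{-1}=\psi(-a)$ and realigns the coefficient extraction with the claimed product.

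I expect the only genuinely delicate point to be the bookkeeping in this last step: passing between $\prod_{a\in D}(1-\psi(a)u)$ and $\prod_{a\in D}(u-\psi(-a))$ introduces the scalar prefactor $(-1)^{|D|}\psi\bigl(\sum_{a\in D}a\bigr)$ together with a degree shift, so one must track this prefactor and the reversal of the coefficient index carefully when matching the coefficient of $u^k$ on the two sides. By contrast, the formal-power-series manipulations themselves — interchanging the two summations and the $\exp$–$\log$ cancellation — are routine, valid either as identities of formal power series in $u$ or for $|u|$ small, and require no input beyond Lemma \ref{Egf}.
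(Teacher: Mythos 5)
Your generating-function computation is correct and is in essence the paper's own argument, executed more cleanly: the paper also starts from Lemma \ref{Egf} with $t_i=-s_D(\psi^i)$, but it groups the exponent by residues modulo $d=o(\psi)$ and passes through integrals of the form $\int_0^u x^{j-1}/(1-x^d)\,dx$, whereas you collapse the exponent directly via $-\log(1-z)=\sum_{i\geq 1}z^i/i$. Both routes land on the same closed form, namely $\prod_{a\in D}(1-\psi(a)u)$ in your normalization.

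The gap is the step you defer as ``bookkeeping'': it cannot be completed. As you yourself compute, $\prod_{a\in D}(1-\psi(a)u)=(-1)^{|D|}\psi\bigl(\sum_{a\in D}a\bigr)\prod_{a\in D}(u-\psi(-a))$, and this scalar prefactor is not $1$ in general, so the two products have different coefficients of $u^k$ (there is, incidentally, no degree shift or index reversal to track --- both are polynomials of degree $|D|$ in $u$ and they differ by exactly that scalar). Concretely, for $D=\{1\}$ and $k=1$ the left side of the lemma is $Z_1(-s_D(\psi))=-\psi(1)$, while the stated right side is $[u](u-\psi(-1))=1$; so the lemma as printed is off by the factor $(-1)^{|D|}\psi\bigl(\sum_{a\in D}a\bigr)$. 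The paper's own proof reaches $\prod_{a\in D}(u-\psi(-a))$ only by writing $\int_0^u dx/(x-\psi(-a))=\log(u-\psi(-a))$, i.e.\ by dropping the constant of integration $-\log(-\psi(-a))$, which is precisely the missing scalar. Your version $Z_k=\bigl[u^k/k!\bigr]\prod_{a\in D}(1-\psi(a)u)$ is the correct statement and should be what propagates into Lemma \ref{Hfines} and the proof of Theorem \ref{main00} (where only the evaluation at $u=1$, summed over a conjugation-closed set of characters, is used, so the main theorem itself is unaffected).
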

More generally, one can prove the following stronger result.
\begin{lem}\label{speZ}
Let $t_j=b_{j(\emph{mod}\;d)}$ iff $d\nmid j$ and $t_j=a$ iff $d\mid j$, then
\begin{eqnarray*}
Z_k(b_1, b_2, \ldots, b_{d-1},a,b_1,b_2,\ldots, b_{d-1}, a, b_1, b_2, \ldots,b_{d-1}, a, \ldots)=\left[\dfrac{u^k}{k!}\right]\dfrac{1}{(1-u^{d})^{a/d}\displaystyle\prod_{r=1}^d(1-ue^{-2i\pi r/d})^{\omega(r,d)/d}},
\end{eqnarray*}
where 
\begin{eqnarray*}
\omega(r,d):=\sum_{j=1}^{d-1}b_je^{2i\pi rj/d}.
\end{eqnarray*}
\end{lem}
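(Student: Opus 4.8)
The plan is to prove the stated coefficient identity by establishing the corresponding identity of exponential generating functions and then reading off the coefficient of $u^k/k!$. By Lemma \ref{Egf}, the left-hand side is the $[u^k/k!]$-coefficient of
\[
\sum_{k\geq 0}Z_k(t_1,\ldots,t_k)\frac{u^k}{k!}=\exp\left(\sum_{\ell\geq 1}\frac{t_\ell u^\ell}{\ell}\right),
\]
where $(t_\ell)_{\ell\geq 1}$ is the $d$-periodic sequence with $t_\ell=a$ when $d\mid\ell$ and $t_\ell=b_{\ell\bmod d}$ otherwise. Thus it suffices to show that the right-hand product equals this exponential as a formal power series in $u$; equivalently, taking logarithms, that
\[
\sum_{\ell\geq 1}\frac{t_\ell u^\ell}{\ell}=-\frac{a}{d}\log(1-u^d)-\sum_{r=1}^d\frac{\omega(r,d)}{d}\log\left(1-u\,e^{-2\pi i r/d}\right).
\]
First I would split the left-hand exponent according to whether $d\mid\ell$.

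The terms with $d\mid\ell$ are immediate: writing $\ell=md$ gives $\sum_{m\geq 1}\frac{a\,u^{md}}{md}=-\frac{a}{d}\log(1-u^d)$, which matches the first factor $(1-u^d)^{-a/d}$ on the right. It therefore remains to match the contribution of the indices with $d\nmid\ell$, namely $\sum_{d\nmid\ell}\frac{b_{\ell\bmod d}\,u^\ell}{\ell}$, against the remaining factor $\prod_{r=1}^d(1-u\,e^{-2\pi i r/d})^{-\omega(r,d)/d}$.

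The crux of the argument — and the step I expect to be the main obstacle — is the Fourier-inversion computation that recovers the periodic coefficients $b_{\ell\bmod d}$ from the weights $\omega(r,d)$. Expanding $\log(1-u\,e^{-2\pi i r/d})=-\sum_{\ell\geq 1}\frac{u^\ell}{\ell}e^{-2\pi i r\ell/d}$ and interchanging the (formal) sums, the right-hand logarithm becomes $\sum_{\ell\geq 1}\frac{u^\ell}{\ell}\cdot\frac{1}{d}\sum_{r=1}^d\omega(r,d)e^{-2\pi i r\ell/d}$. Substituting the definition $\omega(r,d)=\sum_{j=1}^{d-1}b_j\,e^{2\pi i rj/d}$ and reversing the order of summation, the coefficient of $\frac{u^\ell}{\ell}$ is $\frac{1}{d}\sum_{j=1}^{d-1}b_j\sum_{r=1}^d e^{2\pi i r(j-\ell)/d}$. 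Here I would invoke the orthogonality relation $\sum_{r=1}^d e^{2\pi i r m/d}=d$ if $d\mid m$ and $0$ otherwise, which collapses the inner sum to $d\,[\ell\equiv j\!\pmod d]$. Since $j$ ranges only over $1,\ldots,d-1$, this picks out $b_{\ell\bmod d}$ exactly when $d\nmid\ell$ and vanishes when $d\mid\ell$, reproducing $\sum_{d\nmid\ell}\frac{b_{\ell\bmod d}\,u^\ell}{\ell}$ as required.

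Having matched both logarithms term by term, the two formal power series in $u$ coincide, and comparing $[u^k/k!]$-coefficients yields the claimed formula for $Z_k(b_1,\ldots,b_{d-1},a,b_1,\ldots)$. The only points demanding care are the legitimacy of rearranging these series — all manipulations take place in $\mathbb{C}[[u]]$, where term-by-term expansion of $\log(1-u\,e^{-2\pi i r/d})$ and summation interchange are valid — and keeping the integer indices $\ell,m,r,j$ notationally distinct from the imaginary unit $i$ appearing in the roots of unity. This argument specializes to Lemma \ref{speZ1} upon taking $a=b_1=\cdots=b_{d-1}$.
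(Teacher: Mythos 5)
Your proof is correct and complete, and it supplies an argument the paper itself omits: the paper proves only the special case Lemma \ref{speZ1} (by evaluating the exponent in Lemma \ref{Egf} through an integral computation) and disposes of Lemma \ref{speZ} with the single remark that it follows ``using Lagrange's interpolation,'' with no details given. Your route --- split the exponent $\sum_{\ell\ge 1}t_\ell u^\ell/\ell$ by residue of $\ell$ modulo $d$, sum the $d\mid\ell$ terms to $-\frac{a}{d}\log(1-u^d)$, and recover $b_{\ell\bmod d}$ from the weights $\omega(r,d)$ via the orthogonality relation $\sum_{r=1}^d e^{2\pi i rm/d}=d\,[d\mid m]$ --- is exactly the discrete Fourier inversion that ``Lagrange interpolation at the $d$-th roots of unity'' encodes, but you carry it out explicitly. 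All the rearrangements are legitimate in $\mathbb{C}[[u]]$, with the fractional powers read as $\exp\bigl(\alpha\log(1-u\zeta)\bigr)$ normalized to constant term $1$, which is the only sensible reading of the right-hand side of the lemma; so the identity is established exactly as stated.

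One caveat concerns your closing sentence. Lemma \ref{speZ1} is not the case $a=b_1=\cdots=b_{d-1}$ (that would make the sequence $t_j$ constant); it is the case $b_j=-s_D(\psi^j)$ for $1\le j\le d-1$ and $a=-s_D(\psi^d)=-|D|$, with $d=o(\psi)$. If you carry out that specialization in your formula (using $\prod_{r=1}^{d}(1-ue^{-2i\pi r/d})=1-u^d$), you obtain $\prod_{a\in D}(1-u\psi(a))$, which differs from the paper's stated $\prod_{a\in D}(u-\psi(-a))$ by the nonzero constant $\prod_{a\in D}(-\psi(-a))$; this discrepancy is traceable to the paper's proof of Lemma \ref{speZ1} dropping the lower limit in $\int_0^u dx/(x-\psi(-a))$, not to any error in your argument. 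The remark is harmless for the statement you were asked to prove, but as written it is false and should be corrected or deleted.
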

We only prove Lemma \ref{speZ1}. Using Lagrange's interpolation one can prove Lemma \ref{speZ}.
\begin{proof}[Proof of Lemma \emph{\ref{speZ1}}]
Let $d=o(\psi)$. From Lemma \ref{Zgen} we have
\begin{align*}
\sum\limits_{k \geq 0}Z_k\left(-s_{D}(\psi), -s_{D}(\psi^2),\ldots,-s_D(\psi^k)\right)\dfrac{u^{k}}{k!}
&=\exp\left\{\sum_{j=1}^{d}-s_D(\psi^j)\sum_{\ell=0}^\infty \dfrac{u^{d\ell +j}}{d\ell+j}\right\}\notag\\
&=\exp\left\{-\sum_{j=1}^{d}\sum_{a\in D}\psi^j(a)\sum_{\ell=0}^\infty \dfrac{u^{d\ell +j}}{d\ell+j}\right\}\notag\\
&=\exp\left\{-\sum_{j=1}^{d}\sum_{a\in D}\psi^j(a)\int_0^u\dfrac{x^{j-1}}{1-x^d}dx\right\}\notag\\
&=\exp\left\{-\sum_{j=1}^{d}\sum_{a\in D}\int_0^u\dfrac{\psi^j(a)\cdot x^{j-1}}{1-x^d}dx\right\}\notag\\
&=\exp\left\{-\sum_{a\in D}\int_{0}^u\dfrac{\sum_{j=1}^{d}\psi^j(a)\cdot x^{j-1}}{1-x^d}dx\right\}\notag\\ 
&=\exp\left\{-\sum_{a\in D}\int_{0}^u\dfrac{\psi(a)(1-\psi^d(a)x^d)}{(1-\psi(a)x)(1-x^d)}\right\}\notag\\
&=\exp\left\{\sum_{a\in D}\int_{0}^u\dfrac{dx}{(x-\psi(-a))}\right\}\notag\\
&=\exp\left\{\sum_{a\in D}\log(u-\psi(-a))\right\}\notag\\
&=\prod_{a\in D}(u-\psi(-a))
\end{align*}
which proves the lemma.
\end{proof}
From Corollary \ref{HZ} and Lemma \ref{speZ1} we obtain:
\begin{lem}\label{Hfines}
We have
\begin{eqnarray*}
F_{\psi}(\overline{X}_{n,k})=(-1)^k\left[\dfrac{u^k}{k!}\right]\prod_{a\in D_n}(u-\psi(-a)).
\end{eqnarray*}
\end{lem}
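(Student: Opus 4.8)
The plan is to obtain this lemma as an immediate consequence of combining Corollary~\ref{HZ} with Lemma~\ref{speZ1}, so no genuinely new argument is required; the substantive content has already been carried out in the generating-function computation that proves Lemma~\ref{speZ1}.

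First I would recall from Corollary~\ref{HZ} that
\begin{equation*}
F_{\psi}(\overline{X}_{n,k})=(-1)^kZ_k\bigl(-s_{D_n}(\psi),-s_{D_n}(\psi^2),\ldots,-s_{D_n}(\psi^k)\bigr).
\end{equation*}
This rewrites the character sum over tuples with distinct coordinates purely in terms of the cycle-index polynomial $Z_k$ evaluated at the power sums $-s_{D_n}(\psi^i)$, where $s_{D_n}(\chi)=\sum_{a\in D_n}\chi(a)$ as in \eqref{schi}. The passage to $Z_k$ is the output of Lemma~\ref{Hpsi} together with the definition \eqref{Zgen}, and nothing beyond that is needed at this stage.

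Next I would apply Lemma~\ref{speZ1} with the set $D$ specialized to $D_n$. That lemma identifies the value $Z_k(-s_{D_n}(\psi),\ldots,-s_{D_n}(\psi^k))$ with the coefficient extraction $\left[\frac{u^k}{k!}\right]\prod_{a\in D_n}(u-\psi(-a))$. Substituting this directly into the displayed formula from Corollary~\ref{HZ} yields
\begin{equation*}
F_{\psi}(\overline{X}_{n,k})=(-1)^k\left[\frac{u^k}{k!}\right]\prod_{a\in D_n}(u-\psi(-a)),
\end{equation*}
which is exactly the claimed identity.

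The only point deserving a word of care is the hypothesis of Lemma~\ref{speZ1}: it is stated for an arbitrary $D\subseteq\mathbb{N}$, and $D_n=\{1,2,\ldots,n\}$ is certainly such a set, so the specialization is legitimate with no further restriction on $\psi$ or $k$. I therefore expect no real obstacle here; the difficulty is entirely front-loaded into the exponential-generating-function manipulation of Lemma~\ref{speZ1}, where the telescoping sum of power-sum contributions is collapsed via the integral evaluation into the clean product $\prod_{a\in D}(u-\psi(-a))$. Once that lemma is in hand, the present statement is a one-line substitution.
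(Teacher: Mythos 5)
Your proof is correct and matches the paper exactly: Lemma \ref{Hfines} is presented there as an immediate consequence of Corollary \ref{HZ} combined with Lemma \ref{speZ1} applied to $D=D_n$, with no further argument given. Nothing more is needed.
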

\section{Proofs of the main results}\label{Ps}
\begin{proof}[Proof of Theorem \emph{\ref{main00}}]
From (\ref{rc}) we have 
\begin{eqnarray}\label{Mkb}
M_{s,n,N}(k_1,k_2,\ldots,k_s,j)&=&\dfrac{1}{N}\left\{\prod_{i=1}^s\binom{n}{k_i}+P_{N,k_1,k_2,\ldots,k_s,}\right\}
\end{eqnarray}
where
\begin{align}\label{I-II-III}
P_{N,k_1,k_2,\ldots,k_s}= \dfrac{1}{k_1!k_2!\ldots k_s!}\sum\limits_{\psi\neq \psi_0}\psi^{-1}(b)\prod_{i=1}^sF_{\psi}(\overline{X}_{n,k_i})
\end{align}
Using Lemma \ref{Hfines}, we see that
\begin{eqnarray}\label{I-II-III-r}
P_{N,k_1,k_2,\ldots,k_s}=\dfrac{(-1)^{k_1+k_2+\ldots+k_s}}{k_1!k_2!\ldots k_s!}\sum\limits_{\psi\neq \psi_0}\psi^{-1}(b) \prod_{i=1}^s\left[\dfrac{u^{k_i}}{k_i!}\right]\prod_{a\in D_n}(u-\psi(-a)).
\end{eqnarray}
Recall that 
\begin{eqnarray}\label{Mb}
\ \ \ \ \ \ \ \ \ \ M_{s,n,N}(j)=\sum_{0\leq k_1,k_2,\ldots,k_s\leq |D_{n}|}(-1)^{k_1+k_2+\ldots+k_s}M_{s,n,N}(k_1,k_2,\ldots,k_s,j).
\end{eqnarray}
Using the well-known fact
\begin{eqnarray*}
\sum\limits_{k=0}^{|D_n|} (-1)^{k}\binom{|D_n|}{k}=0,
\end{eqnarray*}
we see that 
\begin{eqnarray}\label{pbino}
\mbox{}\\
\sum_{0\leq k_1,\ldots,k_s\leq |D_n|}(-1)^{k_1+k_2+\ldots+k_s}\prod_{i=1}^s\binom{n}{k_i}=\left(\sum\limits_{k=0}^{|D_n|} (-1)^{k}\binom{|D_n|}{k}\right)^s=0.\notag
\end{eqnarray}
Thus (\ref{Mkb}), (\ref{Mb}) and (\ref{pbino}) yield,
\begin{eqnarray}\label{Mbr}
M_{s,n,N}(j)&=&\dfrac{1}{N}\sum_{0\leq k_1,k_2,\ldots,k_s\leq |D_{n}|}(-1)^{k_1+k_2+\ldots+k_s}P_{N,k_1,k_2,\ldots,k_s}.
\end{eqnarray}
From (\ref{I-II-III-r}) we have
\begin{eqnarray}\label{summulti}
&&\sum_{0\leq k_1,k_2,\ldots,k_s\leq |D_{n}|}(-1)^{k_1+k_2+\ldots+k_s}P_{N,k_1,k_2,\ldots,k_s}\notag\\&=&\sum\limits_{\psi\neq \psi_0}\psi^{-1}(j) \sum\limits_{0\leq k_1,\ldots,k_s\leq |D_n|}\dfrac{1}{k_1!\ldots k_s!} \prod_{i=1}^s\left[\dfrac{u^{k_i}}{k_i!}\right]\prod_{a\in D_n}(u-\psi(-a))\notag\\
&=&\sum\limits_{\psi\neq \psi_0}\psi^{-1}(j) \sum\limits_{0\leq k_1,k_2,\ldots,k_s\leq |D_n|}\prod_{i=1}^s\left[u^{k_i}\right]\prod_{a\in D_n}(u-\psi(-a))\notag\\
&=&\sum\limits_{\psi\neq \psi_0}\psi^{-1}(j)\left(\sum\limits_{k=0}^{|D_n|}[u^{k}]\prod_{a\in D_n}(u-\psi(-a))\right)^s.
\end{eqnarray}
It is clear that
\begin{eqnarray}
\sum\limits_{k=0}^{|D_n|}[u^{k}]\prod_{a\in D_n}(u-\psi(-a))=\prod_{a\in D_n}(1-\psi(-a)),
\end{eqnarray}
and thus (\ref{summulti}) implies 
\begin{eqnarray}\label{finpn}
\sum_{0\leq k_1,k_2,\ldots,k_s\leq |D_{n}|}(-1)^{k_1+k_2+\ldots+k_s}P_{N,k_1,k_2,\ldots,k_s}=\sum\limits_{\psi\neq \psi_0}\psi^{-1}(j)\prod_{a\in D_n}(1-\psi(-a))^s.
\end{eqnarray}
Finally, from (\ref{Mbr}) and (\ref{finpn}) we get
\begin{eqnarray}
M_{s,n,N}(j)=\dfrac{1}{N}\sum_{\psi\neq \psi_0}\psi^{-1}(j)\prod_{a\in D_n}(1-\psi(-a))^s,
\end{eqnarray}
which proves the result.
 \end{proof}
 \begin{proof}[Proof of Theorem \emph{\ref{main0000}}]
 We note that 
 \begin{eqnarray}\label{iden2}
 \sin\theta=\dfrac{e^{i\theta}-e^{-i\theta}}{2i},\hspace{1cm}\cos\theta=\dfrac{e^{i\theta}+e^{-i\theta}}{2}.
 \end{eqnarray}
 First, let us assume that $N$ is odd. Then for every $\psi_0\neq \psi\in\hat{\mathbb{Z}}_{N}$, there is an element in $\{-N/2\leq r\leq N/2\}$ such that $\psi(m)=e^{2i\pi mr/(N)}$.  
 
 Using the first identity in (\ref{iden2}) in the sum in the right-hand side of Theorem \ref{main00} we get
 \begin{eqnarray}\label{sincos1}
 M_{s,n,N}(j)&=&\dfrac{1}{N}\sideset{}{'}\sum_{-N/2\leq r\leq N/2}e^{2ij\pi r/N}e^{-i\pi rsn(n+1)/2N}\prod_{a\in D_n}\left(e^{i\pi ar/N}-e^{-i\pi ar/N}\right)^s\notag\\
 &=&\dfrac{(2i)^{sn}}{N}\sideset{}{'}\sum_{-N/2\leq r\leq N/2}e^{i\pi r(2j-sn(n+1)/2)/N}\prod_{a\in D_n}\sin^s\left(\dfrac{\pi a r}{N}\right),
 \end{eqnarray}
 where $\sideset{}{'}\sum$ indicates $r\neq 0$. If $N$ is even, $r=-N/2\equiv N/2\;(\mbox{mod}\;N)$ corresponds to the character of order $2$. Thus in this case, except for this character, for every other character, there is an element in $\{-N/2< r< N/2\}$ such that $\psi(m)=e^{2i\pi mr/N}$. However, we note that for $r=\pm N/2$, the product $\prod_{a\in D_n}\sin^s\left(\dfrac{\pi a r}{N}\right)=0$ since $2\in D_n$. Thus in this case too, we have
 \begin{eqnarray}\label{sincos2}
 M_{s,n,N}(j)=\dfrac{(2i)^{sn}}{N}\sideset{}{'}\sum_{-N/2\leq r\leq N/2}e^{i\pi r(2j-sn(n+1)/2)/N}\prod_{a\in D_n}\sin^s\left(\dfrac{\pi a r}{N}\right).
 \end{eqnarray}
 Next we combine terms in (\ref{sincos1}) and (\ref{sincos2}) corresponding to $r$ and $-r$ to get
 \begin{eqnarray}\label{cond23}
 \mbox{}\\
 &&M_{s,n,N}(j)=\dfrac{(2i)^{sn}}{N}\sum_{1\leq r\leq N/2}\left(e^{i\pi r(2j-sn(n+1)/2)/N}\prod_{a\in D_n}\sin^s\left(\dfrac{\pi a r}{N}\right)\right.\notag\\&&\left.+\;e^{-i\pi r(2j-sn(n+1)/2)/N}\prod_{a\in D_n}\sin^s\left(-\dfrac{\pi a r}{N}\right)\right)\notag\\
 &=&\dfrac{(2i)^{sn}}{N}\sum_{1\leq r\leq N/2}\left(e^{i\pi r(2j-sn(n+1)/2)/N}+(-1)^{sn}e^{-i\pi r(2j-sn(n+1)/2)/N}\right)\prod_{a\in D_n}\sin^s\left(\dfrac{\pi a r}{N}\right).\notag
 \end{eqnarray}
 Using the identities in (\ref{iden2}), (\ref{cond23}) yields
 \begin{eqnarray*}
 M_{s,n,N}(j)=\left\{\begin{array}{cc}
 \dfrac{(2i)^{sn+1}}{N}\displaystyle\sum_{1\leq r\leq N/2}\sin\left(\pi r\cdot\dfrac{2j-sn(n+1)/2}{N}\right)\displaystyle\prod_{a\in D_n}\sin^s\left(\dfrac{\pi a r}{N}\right),& \mbox{if}\;sn\equiv 1\;(\mbox{mod}\;2)\\
 \dfrac{2(2i)^{sn}}{N}\displaystyle\sum_{1\leq r\leq N/2}\cos\left(\pi r\cdot\dfrac{2j-sn(n+1)/2}{N}\right)\displaystyle\prod_{a\in D_n}\sin^s\left(\dfrac{\pi a r}{N}\right),&\mbox{otherwise}.
 \end{array}\right.
 \end{eqnarray*}
 \end{proof}
 \begin{proof}[Proof of Theorem \emph{\ref{main000}}]
 Let $j\in\mathbb{Z}_{N}$. If $N\leq N_{s,n}+1$ be such that $2j\equiv N_{s,n}\;(\mbox{mod}\;N)$, we find $\sin\left(\pi r\dfrac{2j-N_{s,n}}{N}\right)=0$ for all $1\leq r\leq N/2$. Since $sn\equiv 1\;(\mbox{mod}\;2)$, using Theorem \ref{main0000} we obtain
 \begin{eqnarray}
 M_{s,n}(j)&=&0.
 \end{eqnarray}
If $N'\mid N_{s,n}$ then taking $j=0$ above, we find
\begin{eqnarray}
M_{s,n}(0)=0
\end{eqnarray}
which proves our assertion.
 \end{proof}
 \begin{proof}[Proof of Theorem \emph{\ref{main0}}]
 To prove this theorem we only have to note that for $N=N_{s,n}+1$, $M_{s,n,N_{s,n}}(j)=t_{j,s,n}$. Now applying Theorem \ref{main00} we obtain the theorem.  
 \end{proof}
 \begin{proof}[Proof of Theorem \emph{\ref{main1}}]
 To prove this theorem we choose $N=n+1$. Thus $|\hat{G}|=n+1$. If $\psi\neq \psi_0$ be such that $o(\psi)<n+1$. Then there exists an $a\in D_n$ such that $o(\psi)=a$, in which case we have
 \begin{eqnarray}\label{zerocha}
 \prod_{a\in D_n}(1-\psi(-a))=0.
 \end{eqnarray}
 If $o(\psi)=n+1$ we have
 \begin{eqnarray}\label{nonzerocha}
 \prod_{a\in D_n}(1-\psi(-a))=\lim_{z\rightarrow 1}\dfrac{1-z^{n+1}}{1-z}=n+1.
 \end{eqnarray}
 In view of (\ref{zerocha}) and (\ref{nonzerocha}) we see from Theorem \ref{main00} that 
 \begin{eqnarray}
 M_{s,n,n}(j)&=&\dfrac{1}{n+1}\sum_{\substack{\psi\neq \psi_0\\o(\psi)=n+1}}\psi^{-1}(j)(n+1)^s\notag\\
 &=&(n+1)^{s-1}\sum_{\substack{\psi\neq \psi_0\\o(\psi)=n+1}}\psi^{-1}(j).
 \end{eqnarray}
 In view of the fact that there are exactly $\varphi(n+1)$ characters each of order $n+1$, we see that
 \begin{eqnarray}
 \sum_{\substack{\psi\neq \psi_0\\o(\psi)=n+1}}\psi^{-1}(j)=\left\{\begin{array}{cc}
 \varphi(n+1),& \text{if}\;j=0,\\
 -1,&\text{otherwise},
 \end{array}\right.
 \end{eqnarray}
 which proves the theorem.
 \end{proof}
 \begin{proof}[Proof of Corollary \emph{\ref{div1}}]
 Using Theorem \ref{main000}, we see that 
 \begin{eqnarray}\label{div2}
 \sum_{0\leq \ell\leq N_{s,n}/D}t_{\ell D,s,n}=0.
 \end{eqnarray}
 If $N_{s,n}/2<D\leq N_{s,n}$, the sum in (\ref{div2}) consists only of two terms corresponding to $\ell=0, 1$. Thus
 \begin{eqnarray*}
 1+t_{D,s,n}=0.
 \end{eqnarray*}
 Note that $T_{s,n}(q)$ is an anti-palindromic polynomial since 
 \begin{eqnarray}
 q^{N_{s,n}}T_{s,n}(1/q)=(-1)^{sn} T_{s,n}(q)=-T_{s,n}(q),
 \end{eqnarray}
 since $s$ and $n$ are both odd. Thus 
 \begin{eqnarray}
 t_{(N_{s,n}-D),s,n}=-t_{D,s,n}=1.
 \end{eqnarray}
 \end{proof}
 \begin{proof}[Proof of Corollary \emph{\ref{peak1}}]
 Since $n\equiv 3\pmod 4$, we can choose $N'=N_{s,n}/2$ in Theorem \ref{main000}. Thus we have
 \begin{eqnarray}\label{pair2}
 1+t_{N_{s,n}/2,s,n}+t_{N_{s,n},s,n}=0.
 \end{eqnarray}
 Note that 
 \begin{eqnarray}\label{antipalin}
 q^{N_{s,n}}T_{s,n}(1/q)=(-1)^{sn}T_{s,n}(q)=-T_{s,n}(q),
 \end{eqnarray}
 since $s$ and $n$ are both odd. Thus $t_{N_{s,n},s,n}=-t_{0,s,n}=-1$. Hence (\ref{pair2}) yields
 \begin{eqnarray*}
 t_{N_{s,n}/2,s,n}=0.
 \end{eqnarray*}
 \end{proof}
 \begin{proof}[Proof of Theorem \emph{\ref{maxpeak}}]
 Following Sudler \cite{Sud}, we obtain by Cauchy's formula that 
 \begin{eqnarray}\label{maxpeak1}
 |t_{j,s,n}|=\left|\dfrac{1}{2i\pi}\int_{|q|=1}\dfrac{T_{s,n}(q)}{q^{j+1}}dq\right|\leq \dfrac{1}{2\pi}\max_{|q|=1}|T_{s,n}(q)|\int_{|q|=1}\dfrac{|dq|}{|q|^{j+1}}=\max_{|q|=1}|T_{s,n}(q)|.
 \end{eqnarray}
 On the other hand we have
 \begin{eqnarray}\label{maxpeak2}
 \max_{|q|=1}|T_{s,n}(q)|\leq \sum_{0\leq j\leq N_{s,n}}|t_{j,s,n}|\leq (N_{s,n}+1)\max_{j}|t_{j,s,n}|. 
 \end{eqnarray}
 Hence (\ref{maxpeak1}) and (\ref{maxpeak2}) imply
 \begin{eqnarray}\label{Sud00}
 \log \max_{j}|t_{j,s,n}|=\log \max_{|q|=1}|T_{s,n}(q)|+O(\log n).
 \end{eqnarray}
 Thus the theorem follows if we show that 
 \begin{eqnarray}\label{Sud2}
 \log \max_{|q|=1}|T_{s,n}(q)|=sKn+O(\log n).
 \end{eqnarray}
 We note that
 \begin{eqnarray}\label{Sud0}
 \max_{|q|=1}|T_{s,n}(q)|=\max_{|q|=1}\left|\prod_{j=1}^n(1-q^j)\right|^s=\left(\max_{|q|=1}\left|\prod_{j=1}^n(1-q^j)\right|\right)^s.
 \end{eqnarray}
 From \cite{Sud}, we have 
 \begin{eqnarray}\label{Sud1}
 \log\max_{|q|=1}\left|\prod_{j=1}^n(1-q^j)\right|=Kn+O(\log n).
 \end{eqnarray}
 Hence in view of (\ref{Sud00}), (\ref{Sud0}) and (\ref{Sud1}), the estimate (\ref{Sud2}) and hence the theorem follow. 
 \end{proof}

\end{document}